\documentclass[reqno,  12pt]{amsart}
\usepackage{extarrows}
\usepackage{ragged2e}
\usepackage{amssymb}
\usepackage{amsfonts}
\usepackage{amsmath}
\usepackage{mathrsfs}
\usepackage{diagbox}
\usepackage{color,  soul}
\usepackage{bbm}
\usepackage{amsthm, graphicx,  empheq}
\usepackage{framed}
\usepackage[backref,  colorlinks,  linkcolor=red,  anchorcolor=green,  citecolor=blue]{hyperref}

\usepackage{cancel}
\usepackage{ulem}
\usepackage{arydshln}

\usepackage{tikz}
\usetikzlibrary{arrows,backgrounds,snakes,shapes}

\usepackage{booktabs}
\usepackage{makecell}
\usepackage[linesnumbered,ruled]{algorithm2e}%伪代码模式

\usepackage{lineno}  % show line number for draft. disable for final version
\newtheorem{lemma}{Lemma}[section]
\newtheorem{theorem}{Theorem}[section]

\newtheorem{remark}{Remark}[section]
\newtheorem{proposition}{Proposition}[section] % defines the proposition environment
\numberwithin{equation}{section}

\makeatletter
\@namedef{subjclassname@2020}{\textup{2020} Mathematics Subject Classification}
\makeatother

\usetikzlibrary{calc,trees,positioning,arrows,chains,shapes.geometric,%
    decorations.pathreplacing,decorations.pathmorphing,shapes,%
    matrix,shapes.symbols}
\tikzset{
>=stealth',
punktchain/.style={
	rectangle, 
	rounded corners, 
	draw=black, very thick,
	text width = 8em, %修改实现方框的长度
	minimum height=2em, %修改实线方框的高度
	text centered%文本居中
                        },%设置长方形(命令是punktchain)
yuan/.style={
	ellipse, 
	rounded corners, 
	draw=black, very thick,
	text width = 8em, %修改实现方框的长度
	minimum height=2em, %修改实线方框的高度
	text centered%文本居中
                        },%设置椭圆形
line/.style={draw, thick, <-},%设置线的类型
element/.style={
	tape,
	top color=white,
	bottom color=blue!50!black!60!,
	minimum width=8em,
	draw=blue!40!black!90, very thick,
	text width=10em, 
	minimum height=3.5em, 
	text centered, 
	on chain},%设置单元性质
every join/.style={->, thick,shorten >=1pt},
decoration={brace},
tuborg/.style={decorate},
tubnode/.style={midway, right=2pt},
punkt/.style={
	rectangle, 
	rounded corners, 
	draw=black, very thick,
	text width=12em, 
	minimum height=3em, 
	text centered},
}

\begin{document}

% % 设置array的列间距为1pt: 
% \renewcommand{\arraycolsep}{6pt}
% % 改变array的行间距为原来的1.5倍: 
% \renewcommand{\arraystretch}{1.25}

\title[Spectral properties of Van Leer and AUSM schemes]
{On the Spectral Properties of Van Leer and AUSM Flux-Vector Splitting Schemes}
\author{Zhengrong Xie}
\author{Zheng Li}

\address[Z. Xie]{School of Mathematical Sciences,  
East China Normal University, Shanghai 200241, China}\email{\tt xzr\_nature@163.com;\ 52265500018@stu.ecnu.edu.cn}
\address[Z. Li]{School of Aeronautics and Astronautics, 
Shanghai Jiao Tong University, Shanghai, China}\email{\tt lizheng2023@sjtu.edu.cn}

\keywords{flux-vector splitting, Van Leer scheme, AUSM, eigenvalue analysis, Sturm's theorem, hyperbolic conservation laws}

% msc2020
% 65M12 	Stability and convergence of numerical methods for initial value and initial-boundary value problems involving PDEs
% 65N30  Finite element, Rayleigh-Ritz and Galerkin methods for boundary value problems involving PDEs

% 35Q31 — Euler equations(欧拉方程)
% 这是您论文研究的核心方程, 属于偏微分方程在流体力学中的应用. 

% 76M12 — Finite volume methods(有限体积方法)
% 通量分裂格式通常是在有限体积框架下实现的. 

% 65M12 — Stability and convergence of numerical methods(数值方法的稳定性与收敛性)
% 特征值分析本质上是为了研究格式的稳定性. 

% 35L65 — Conservation laws(守恒律方程)
% 欧拉方程属于双曲守恒律系统. 

% 47A75 — Eigenvalue problems (特征值问题)
% 您论文的核心是对Jacobian矩阵的特征值进行符号分析. 

% 15A18 — Eigenvalues, singular values, and eigenvectors(矩阵特征值)
% 如果您在证明中大量使用了矩阵代数工具. 

\subjclass[2020]{35Q31, 65M12, 35L65, 47A75, 15A18}

\date{\today}

\begin{abstract}
The flux-vector splitting scheme of Van Leer is a cornerstone of computational fluid dynamics, yet its original proof of the eigenvalue sign condition was presented in a condensed form. 
In this work, we provide a detailed and rigorous analysis of the eigenvalues of the Jacobian matrices associated with the Van Leer splitting for the one-dimensional Euler equations. 
By constructing the Sturm sequence of the discriminant, we prove that for the admissible parameter range $1 \le \gamma \le 3$, $|M|<1$, and $a>0$, the Jacobian $\partial F^+/\partial U$ has one zero eigenvalue and two positive real eigenvalues, confirming Van Leer's original claim. 
Furthermore, we extend our analysis to two variants of the original AUSM scheme (Advection Upstream Splitting Method) proposed by Liou and Steffen, considering both linear and second-order pressure splittings. 
For the linear pressure splitting we show that the eigenvalues are not all of the same sign, while for the second-order pressure splitting we prove that all coefficients of the characteristic equation are positive. 
Numerical experiments reported in the appendix confirm the non-negativity of the discriminant for the AUSM with the second-order pressure splitting, 
implying that its eigenvalues are real and positive. 
\end{abstract}

\allowbreak
\allowdisplaybreaks

\maketitle

\tableofcontents %disable for short paper

\section{Introduction}\label{sec-Introduction}
The Euler equations of gas dynamics are a system of hyperbolic conservation laws that describe the motion of an inviscid compressible fluid. When solving these equations numerically with upwind methods, one needs to determine the direction of wave propagation. Flux-vector splitting (FVS) techniques decompose the physical flux into two parts, each associated with waves travelling in a single direction. Two of the most influential FVS schemes are those of Van Leer~\cite{vanleer1982flux} and Steger \& Warming~\cite{steger1981flux}. A fundamental requirement for a consistent splitting is that the Jacobian matrices of the split fluxes should have eigenvalues of the appropriate sign: $\partial F^+/\partial U$ should have non-negative eigenvalues and $\partial F^-/\partial U$ non-positive eigenvalues. While Steger-Warming's splitting was rigorously analysed by Lerat~\cite{lerat1983propriete} and more recently by Witherden \& Jameson~\cite{witherden2018spectrum}, the proof for Van Leer's scheme has remained somewhat sketchy in the original paper.
\par
Van Leer's 1982 paper~\cite{vanleer1982flux} presented a clever construction based on polynomial interpolation of the flux components. He stated that the resulting split fluxes satisfy the eigenvalue sign condition, but the verification was omitted due to the complexity of the algebra. Over the years, several researchers have questioned whether the condition truly holds, and numerical experiments have always indicated robustness, but a complete mathematical proof has been lacking. In this work we fill this gap by providing a detailed, step-by-step analysis of the eigenvalues of $\partial F^+/\partial U$ for the Van Leer splitting. Using the invertible transformation between conservative variables $U=(\rho,\rho u,E)$ and primitive variables $W=(\rho,a,M)$ (where $a$ is the speed of sound and $M=u/a$ the Mach number), we first compute the Jacobian in primitive variables and then transform back to conservative variables. The resulting expressions, although lengthy, can be simplified to reveal a zero eigenvalue (due to a functional relation among the flux components) and a quadratic factor whose coefficients we analyse in detail. 
\textbf{To confirm that the two non-zero eigenvalues are real, we rigorously prove the non-negativity of the discriminant $\Delta_{\mathrm{vl}} = (T_{\mathrm{vl}}^{+})^2-4S_{\mathrm{vl}}^{+}$ by constructing its Sturm sequence—a systematic algebraic verification carried out with the aid of Mathematica. }The complete Sturm sequence analysis, together with a detailed sign study, is presented in Appendix~\ref{app:sturm}. This establishes that for all $\gamma\in[1,3]$, $M\in(-1,1)$ and $a>0$ the two non-zero eigenvalues are real and positive, thus confirming Van Leer's original claim. It is worth noting that the coefficients we obtain are identical to those in Van Leer's Eq.~(12) after equivalent algebraic manipulations, demonstrating consistency with his work.
\par
In addition, we apply the same methodology to two variants of the original AUSM scheme proposed by Liou and Steffen \cite{liou1993new}. In both variants, the Mach number splitting is identical to Van Leer's, while the pressure splitting is either linear or second-order, the latter being reminiscent of Van Leer's construction. For the linear AUSM we find that the coefficients of the characteristic polynomial can change sign, implying that the eigenvalues are not all of the same sign. For the second-order AUSM we prove that all three coefficients are strictly positive. To complete the analysis, we investigate the discriminant of the cubic; numerical experiments reported in Appendix~\ref{app:discriminant} show that it remains non-negative throughout the parameter domain, ensuring that the eigenvalues are real and hence positive.
\par
The paper is organised as follows. Section \ref{sec-Preliminaries} recalls the Euler equations, introduces the primitive variables and the invertible transformation to conservative variables, restricts the analysis to the subsonic regime $|M|<1$, and reviews the basic matrix invariants (trace, sum of principal minors, determinant) used for eigenvalue sign analysis. 
Section \ref{sec-VanLeer} analyses the Van Leer splitting in detail. Section \ref{sec-AUSM} treats the two AUSM variants. Conclusions are drawn in Section \ref{sec-Conclusion}. 
\textbf{Appendix~\ref{app:sturm} contains the Sturm sequence proof of the non-negativity of the Van Leer discriminant,} and Appendix~\ref{app:discriminant} provides numerical evidence for the non-negativity of the discriminants for both the Van Leer and the second-order AUSM splittings. For completeness, the explicit expressions of the Jacobian matrices $\partial F^+/\partial U$ for all three splittings are given in Appendix~\ref{app:jacobians}.

\section{Preliminaries}\label{sec-Preliminaries}

\subsection{Euler equations}
The one-dimensional Euler equations for a non-isentropic ideal gas are
\[
\partial_t U + \partial_x F(U) = 0,
\]
with
\[
U = \begin{pmatrix} \rho \\ \rho u \\ E \end{pmatrix},\qquad 
F(U) = \begin{pmatrix} \rho u \\ \rho u^2 + p \\ u H \end{pmatrix},
\]
where $\rho$ is density, $u$ velocity, $p$ pressure, $E = e + \rho u^2/2$ total energy per unit volume, $H = E + p$ total enthalpy per unit volume, and $e$ internal energy per unit volume. The ideal gas law gives $p = (\gamma-1) e$ with $\gamma = c_p/c_v > 1$. The speed of sound is $a = \sqrt{\gamma p/\rho}$.

\subsection{Primitive variables and the invertible transformation}
It is convenient to work with the primitive variables $W = (\rho, a, M)$, where $M = u/a$ is the Mach number. The mapping $W \to U$ is
\[
U(W) = \begin{pmatrix}
\rho \\
\rho a M \\
\rho a^2\left( \dfrac{1}{\gamma(\gamma-1)} + \dfrac{M^2}{2} \right)
\end{pmatrix}.
\]
Its Jacobian matrix
\[
\frac{\partial U}{\partial W} = \begin{pmatrix}
1 & 0 & 0 \\
aM & \rho M & \rho a \\
a^2 Q & 2\rho a Q & \rho a^2 M
\end{pmatrix}, \qquad
Q = \frac{1}{\gamma(\gamma-1)} + \frac{M^2}{2},
\]
has determinant
\[
\det\left(\frac{\partial U}{\partial W}\right) = -\frac{2\rho^2 a^2}{\gamma(\gamma-1)} \neq 0,
\]
so the transformation is globally invertible on the set of physically admissible states ($\rho>0$, $a>0$). 
The inverse matrix is given by
\begin{align}\label{eq-WU}
    \mathfrak{T}:=\frac{\partial W}{\partial U}=(\frac{\partial U}{\partial W})^{-1} 
    =\left(
    \begin{array}{ccc}
    1 & 0 & 0 \\
    \frac{a \left((\gamma -1) \gamma  M^2-2\right)}{4 \rho } & -\frac{(\gamma -1) \gamma  M}{2 \rho } & \frac{(\gamma -1) \gamma }{2 a \rho } \\
    -\frac{(\gamma -1) \gamma  M^3+2 M}{4 \rho } & \frac{(\gamma -1) \gamma  M^2+2}{2 a \rho } & -\frac{(\gamma -1) \gamma  M}{2 a^2 \rho } \\
    \end{array}
    \right).  
\end{align}
\textbf{This inverse matrix will be used later to transform Jacobians from primitive to conservative variables. }

\subsection{Flux-vector splitting}
Both Van Leer and AUSM split the physical flux as $F = F^+ + F^-$, with the numerical flux at a cell interface given by $\widehat{F}_{i+1/2} = F^+(U_i) + F^-(U_{i+1})$. The goal is to design $F^\pm$ so that $\partial F^+/\partial U$ has non-negative eigenvalues and $\partial F^-/\partial U$ non-positive eigenvalues, ensuring an upwind bias.
\par
\textbf{In this work we restrict our analysis to the subsonic regime \(|M|<1\). }
The reason for this restriction is as follows: for supersonic flows (\(|M|>1\)) the flux splitting becomes purely upwind: 
the positive split flux \(F^+\) reduces to the full physical flux when \(M>1\) and vanishes when \(M<-1\), while the opposite holds for \(F^-\). 
Consequently, the spectral properties of the split-flux Jacobians are trivial in supersonic regions — \(\partial F^+/\partial U\) coincides with the full flux Jacobian \(\frac{\partial F}{\partial U}\) for \(M>1\) and is identically zero for \(M<-1\), so the sign of its eigenvalues is obvious. 
% Second, the transonic case \(M=1\) involves sonic points where the splitting formulas are continuous but not differentiable; these points require special treatment and are excluded from the present analysis. 
Thus, the mathematically nontrivial and physically most relevant situation is precisely the subsonic range \(|M|<1\). All subsequent derivations and statements are therefore understood to be valid under this condition. 

\subsection{Relations between eigenvalues and matrix invariants}
\label{sec:eigen-invariants}
For any \(3\times 3\) matrix \(A\), its characteristic polynomial can be expanded as
\begin{equation}
p(\mu)=\det(A-\mu I)= -\mu^3 + (\operatorname{tr} A)\,\mu^2 - \operatorname{\Sigma_{\bigtriangleup}^2}(A)\,\mu + \det(A),
\label{eq:charpoly}
\end{equation}
where \(\operatorname{tr} A\) denotes the trace, \(\operatorname{\Sigma_{\bigtriangleup}^2}(A)\) the sum of all principal minors of order two, and \(\det(A)\) the determinant. Setting \(p(\mu)=0\) yields the characteristic equation
\begin{equation}
\mu^3 - (\operatorname{tr} A)\,\mu^2 + \operatorname{\Sigma_{\bigtriangleup}^2}(A)\,\mu - \det(A)=0.
\label{eq:chareq}
\end{equation}
From Viète's theorem, the three eigenvalues \(\mu_1,\mu_2,\mu_3\) satisfy
\begin{subequations}
\begin{align}
\mu_1+\mu_2+\mu_3 &= \operatorname{tr} A,\\
\mu_1\mu_2+\mu_1\mu_3+\mu_2\mu_3 &= \operatorname{\Sigma_{\bigtriangleup}^2}(A),\\
\mu_1\mu_2\mu_3 &= \det(A).
\end{align}
\end{subequations}
Consequently, if all eigenvalues are non-negative, then necessarily
\[
\operatorname{tr} A \ge 0,\qquad \operatorname{\Sigma_{\bigtriangleup}^2}(A) \ge 0,\qquad \det(A) \ge 0.
\]
Thus, \textbf{take $F^+$ for an example}, to verify the non-negativity of the eigenvalues of the Jacobian matrices considered in this work, it suffices to analyse the signs of their traces, second-order principal minors sums, and determinants.

\section{Spectral Analysis of the Van Leer Splitting}\label{sec-VanLeer}
In this section we analyse the Van Leer splitting. \textbf{Without loss of generality, we focus on the positive part \(F^+\);} the analysis for \(F^-\) is completely analogous.

\subsection{Definition of the splitting}
Van Leer's positive flux reads
\[
F^+_{\mathrm{vl}} = 
\begin{pmatrix}
f_{\mathrm{vl},1}^+ \\
f_{\mathrm{vl},2}^+ \\
f_{\mathrm{vl},3}^+ 	
\end{pmatrix}
= \rho a M^+ \begin{pmatrix}
1 \\
\dfrac{(\gamma-1)u+2a}{\gamma} \\
\dfrac{[(\gamma-1)u+2a]^2}{2(\gamma^2-1)}
\end{pmatrix},\qquad
M^+ = \frac{(M+1)^2}{4}.
\]
Introducing $D = (\gamma-1)M+2$ and $u = aM$, we rewrite
\[
F^+_{\mathrm{vl}} = \rho a M^+ \begin{pmatrix}
1 \\[2mm]
\dfrac{a D}{\gamma} \\[4mm]
\dfrac{a^2 D^2}{2(\gamma^2-1)}
\end{pmatrix}.
\]

\subsection{Jacobian in primitive variables and Transformation to conservative variables}
To compute the Jacobian matrix \(\partial F^+_{\mathrm{vl}}/\partial (\rho,a,M)\), it is convenient to introduce the auxiliary quantities
\[
K = \rho a M^+,\qquad A = aD. 
\]
The positive flux can then be written compactly as
\[
F^+_{\mathrm{vl}} = \begin{pmatrix}
K \\[2mm]
\dfrac{K A}{\gamma} \\[4mm]
\dfrac{K A^2}{2(\gamma^2-1)}
\end{pmatrix}.
\]
The derivatives of \(K\), \(A\) and \(D\) with respect to the primitive variables are
\[
\begin{aligned}
K_\rho &= a M^+, & K_a &= \rho M^+, & K_M &= \rho a\frac{M+1}{2},\\[4pt]
D_\rho &= 0,     & D_a &= 0,        & D_M &= \gamma-1,\\[4pt]
A_\rho &= 0,     & A_a &= D,        & A_M &= a(\gamma-1) \;(= a D_M).
\end{aligned}
\]
Applying the chain rule to each component of \(F^+_{\mathrm{vl}}\) yields the entries of the Jacobian matrix. Explicitly,
% A straightforward differentiation yields
\begin{align}
\frac{\partial F^+_{\mathrm{vl}}}{\partial W} = 
\begin{pmatrix}
a M^+ & \rho M^+ & \dfrac{\rho a(M+1)}{2} \\[6pt]
\dfrac{a^2 M^+ D}{\gamma} & \dfrac{2\rho a M^+ D}{\gamma} & 
\dfrac{\rho a^2}{\gamma}\Big(\dfrac{M+1}{2}D + M^+(\gamma-1)\Big) \\[10pt]
\dfrac{a^3 M^+ D^2}{2(\gamma^2-1)} & \dfrac{3\rho a^2 M^+ D^2}{2(\gamma^2-1)} &
\dfrac{\rho a^3}{2(\gamma^2-1)}\Big(\dfrac{M+1}{2}D^2 + 2M^+ D(\gamma-1)\Big)
\end{pmatrix}.
\end{align}
Multiplying by $\mathfrak{T}$ (Eq.\eqref{eq-WU}), we obtain the Jacobian with respect to conservative variables:  
\begin{align}
J^+_{\mathrm{vl},U}:=\frac{\partial F^+_{\mathrm{vl}}}{\partial U} = \frac{\partial F^+_{\mathrm{vl}}}{\partial W}\left(\frac{\partial U}{\partial W}\right)^{-1}=\frac{\partial F^+_{\mathrm{vl}}}{\partial W}\mathfrak{T}. 
\end{align}
The explicit expressions of $J^+_{\mathrm{vl},U}$  is provided in Appendix \ref{app:vanleer}.

\subsection{Rank Deficiency and the Characteristic Polynomial}
\label{sec:rank_deficiency}
The three components of $F^+_{\mathrm{vl}}$ are not independent.  From their definitions one can verify the algebraic relation  
\begin{align}
f_{\mathrm{vl},3}^+ = \frac{\gamma^2}{2(\gamma^2-1)} \frac{(f_{\mathrm{vl},2}^+)^2}{f_{\mathrm{vl},1}^+}, \qquad |M|<1.
\end{align}
Differentiating this identity shows that the third row of $J_{\mathrm{vl},U}^+$ is a linear combination (in the sense of functional dependence) of the first two rows, i.e., 
\[
\frac{\partial f_{\mathrm{vl},3}^+}{\partial U_j}
= \frac{\gamma^2}{2(\gamma^2-1)} \left[
\frac{2f_{\mathrm{vl},2}^+}{f_{\mathrm{vl},1}^+} \frac{\partial f_{\mathrm{vl},2}^+}{\partial U_j}
- \frac{(f_{\mathrm{vl},2}^+)^2}{(f_{\mathrm{vl},1}^+)^2} \frac{\partial f_{\mathrm{vl},1}^+}{\partial U_j}
\right]. 
\]
Consequently $J_{\mathrm{vl},U}^+$ has rank at most two, and its determinant vanishes:
\begin{align}
	\det J_{\mathrm{vl},U}^+ = 0. 
\end{align}
\textbf{This is precisely the property required by Van Leer's design principle (vi)~\cite{vanleer1982flux}:} \textit{``for $|M|<1$ the Jacobian $df^+/dw$ should have one vanishing eigenvalue.''}  
Hence, according to Eq.\eqref{eq:chareq}, the characteristic equation of $J_{\mathrm{vl},U}^+$ must factor as  
\begin{align}
\det(J_{\mathrm{vl},U}^+ - \mu I) = \mu \left(\mu^2 - T_{\mathrm{vl}}^{+}\mu + S_{\mathrm{vl}}^{+}\right)=0,  
\end{align}
where $T_{\mathrm{vl}}^{+}$ is the trace of $J_{\mathrm{vl}}^{+}$ and $S_{\mathrm{vl}}^{+}$ is the sum of second-order principal minors to $J_{\mathrm{vl}}^{+}$. 
\par
After substantial algebraic simplification (carried out with the help of the computer algebra system Mathematica) 
we arrive at the following compact expressions: 
\begin{subequations}
\begin{align}
T_{\mathrm{vl}}^{+} 
&= \frac{a}{8\gamma(\gamma+1)}\left[9\gamma(\gamma+1) - (\gamma-1)\gamma M^4 + 2(2\gamma^2+\gamma-3)M^2 + 12\gamma(\gamma+1)M + 6\right], \label{eq-VanLeer-T}\\
S_{\mathrm{vl}}^{+} 
&= -\frac{a^2 (M+1)^3}{32\gamma(\gamma+1)}\left[-3\gamma^2-14\gamma + 4(\gamma-1)\gamma M^2 + (-9\gamma^2+10\gamma+3)M - 3\right]. \label{eq-VanLeer-S}
\end{align}
\end{subequations}
Next, We proceed to analyze the signs of $T_{\mathrm{vl}}^{+}$ and $S_{\mathrm{vl}}^{+}$. 
\begin{remark}
Note that the coefficients we obtained are identical to those in ``Eq.~(12) of Van Leer's original paper~\cite{vanleer1982flux}'', 
up to equivalent algebraic manipulations. 
\end{remark}

\subsection{Sign of the coefficients}
\begin{lemma}[Positivity of $T_{\mathrm{vl}}^{+} $ for Van Leer]\label{lemma-Tvl-positive}
For $\gamma\in[1,3]$, $M\in(-1,1)$ and $a>0$, we have $T_{\mathrm{vl}}^{+} >0$.
\end{lemma}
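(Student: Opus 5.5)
Since $a>0$ and $8\gamma(\gamma+1)>0$, it suffices to show that the bracket in \eqref{eq-VanLeer-T} is positive. Write it as a polynomial in $M$ with $\gamma$ as a parameter:
\[
P(M)=9\gamma(\gamma+1)+6+12\gamma(\gamma+1)M+2(2\gamma^2+\gamma-3)M^2-(\gamma-1)\gamma M^4 .
\]
The plan is to group the terms so that each group is visibly nonnegative on $M\in(-1,1)$, with at least one group strictly positive. Throughout we use that $\gamma\ge1$ gives $\gamma-1\ge0$ and $2\gamma^2+\gamma-3=(2\gamma+3)(\gamma-1)\ge0$.

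\emph{Step 1: the quartic term.} Since $M^4\le M^2$ on $(-1,1)$, we have $-(\gamma-1)\gamma M^4\ge-(\gamma-1)\gamma M^2$. Hence the $M^2$ and $M^4$ terms together are at least
\[
\bigl[2(2\gamma+3)(\gamma-1)-\gamma(\gamma-1)\bigr]M^2=(\gamma-1)(3\gamma+6)M^2\ge0 .
\]

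\emph{Step 2: the remaining terms.} What is left is affine in $M$:
\[
9\gamma(\gamma+1)+6+12\gamma(\gamma+1)M .
\]
This alone is not positive for all $M$, because at $M=-1$ it equals $6-3\gamma(\gamma+1)<0$. So the nonnegative $M^2$ contribution from Step~1 cannot be discarded, and we keep a lower bound that is quadratic in $M$:
\[
P(M)\ge q(M):=(\gamma-1)(3\gamma+6)M^2+12\gamma(\gamma+1)M+9\gamma(\gamma+1)+6 .
\]

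\emph{Step 3: positivity of $q$ on $[-1,1]$.} If $\gamma=1$, then $q$ is linear with $q(-1)=-24+24=0$, so the minimum over the closed interval touches zero. Strict positivity on the open interval still needs care, and the Step~1 bound was lossy. The cleaner approach is to note that $P(M)$ is increasing on $(-1,1)$ and to evaluate $P(-1)$ exactly. We have
\[
P'(M)=12\gamma(\gamma+1)+4(2\gamma^2+\gamma-3)M-4(\gamma-1)\gamma M^3 .
\]
For $|M|<1$ this is at least $12\gamma(\gamma+1)-4(2\gamma^2+\gamma-3)-4\gamma(\gamma-1)=12\gamma+12>0$. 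Therefore $P$ is strictly increasing on $[-1,1]$, and for $M\in(-1,1)$ we get $P(M)>P(-1)$. Direct substitution gives
\[
P(-1)=9\gamma^2+9\gamma+6-12\gamma^2-12\gamma+4\gamma^2+2\gamma-6-\gamma^2+\gamma=0 .
\]
Thus $P(M)>0$ on $(-1,1)$, which proves $T_{\mathrm{vl}}^{+}>0$.

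The main obstacle is that the bound is sharp at the sonic endpoint: $P(-1)=0$ for every $\gamma$. This is consistent with $F^+$ vanishing at $M=-1$. Crude termwise bounds therefore fail, and the argument has to go through monotonicity in $M$ together with the exact endpoint value. Checking the two algebraic facts, the derivative lower bound and $P(-1)=0$, is routine. In fact only $\gamma\ge1$ is used, which covers the stated range $\gamma\in[1,3]$.
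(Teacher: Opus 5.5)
Your proof is correct, but it argues differently from the paper. I checked $P'(M)\ge 12(\gamma+1)$ on $[-1,1]$ and $P(-1)=0$, and both are right.

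\textbf{The paper's route.} It first factors $P_T=(M+1)R(\gamma,M)$ with $R=(\gamma-1)\gamma M^2(1-M)+3(\gamma-1)(\gamma+2)M+9\gamma(\gamma+1)+6$. It then shows $R>0$ by a case split:
\begin{itemize}
\item for $M\ge0$, every term of $R$ is non-negative;
\item for $M=-t<0$, it proves $R$ is decreasing in $t$ from the convexity of $\partial_t R$ and the check $\partial_t R|_{t=1}=2(\gamma-1)(\gamma-3)\le 0$. The minimum is therefore $R(\gamma,-1)=4(2\gamma^2+\gamma+3)>0$.
\end{itemize}

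\textbf{Your route.} You bound $P'$ directly, using the signs of the coefficients for $\gamma\ge1$, and combine strict monotonicity with the exact endpoint value $P(-1)=0$.

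\textbf{Comparison.} Your argument is shorter and needs neither a factorization nor a case split. It is also valid for every $\gamma\ge1$, whereas the paper's uses $\gamma\le 3$ at the step $\partial_t R|_{t=1}\le0$. The paper's factorization, in return, makes the simple zero at the sonic point $M=-1$ explicit. It also yields the sharp linear lower bound $P_T\ge 4(2\gamma^2+\gamma+3)(M+1)$; indeed $R(\gamma,-1)=P_T'(-1)$. Your mean-value estimate only gives the weaker $P\ge 12(\gamma+1)(M+1)$, because it takes the worst cases of the $M$ and $M^3$ terms at different endpoints.

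\textbf{Minor point.} Your exploratory Steps 1--3 can simply be deleted. Incidentally, the quadratic minorant $q$ from Step 1 would also have sufficed. One checks $q(-1)=0$ for every $\gamma\ge1$, not only $\gamma=1$, and $q'(M)\ge q'(-1)=6\gamma^2+6\gamma+12>0$ on $[-1,1]$.
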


\begin{proof}
$T_{\mathrm{vl}}^{+}  = \dfrac{a}{8\gamma(\gamma+1)}P_T(\gamma,M)$ with  
\[
P_T(\gamma,M) = 9\gamma(\gamma+1) - (\gamma-1)\gamma M^4 + 2(2\gamma^2+\gamma-3)M^2 + 12\gamma(\gamma+1)M + 6.
\]
Factorising,
\[
P_T(\gamma,M) = (M+1)\Big[ (\gamma-1)\gamma M^2(1-M) + 3(\gamma-1)(\gamma+2)M + 9\gamma(\gamma+1)+6\Big].
\]
Denote the bracket by $R(\gamma,M)$. For $M\ge0$, all terms are non-negative, so $R>0$. For $M<0$, set $M=-t$ with $t\in(0,1)$. Then
\[
R(\gamma,-t) = (\gamma-1)\gamma t^2(1+t) - 3(\gamma-1)(\gamma+2)t + 9\gamma(\gamma+1)+6.
\]
Let $A=(\gamma-1)\gamma$, $B=3(\gamma-1)(\gamma+2)$, $C=9\gamma(\gamma+1)+6$. Then $R_t(t)=A t^2(1+t)-Bt+C$. Its derivative $R_t'(t)=A(2t+3t^2)-B$ satisfies $R_t'(0)=-B\le0$, $R_t'(1)=5A-B = 2(\gamma-1)(\gamma-3)\le0$, and $R_t''(t)=A(2+6t)\ge0$, so $R_t'$ is non-positive, hence $R_t$ is decreasing. The minimum is at $t=1$: $R_t(1)=2A-B+C = 4(2\gamma^2+\gamma+3)>0$. Thus $R_t(t)>0$ for all $t\in(0,1)$. Consequently $R(\gamma,M)>0$ for all $M\in(-1,1)$, and therefore $P_T>0$ and $T_{\mathrm{vl}}^{+} >0$.
\end{proof}

\begin{lemma}[Non-negativity of $S_{\mathrm{vl}}^{+} $ for Van Leer]\label{lemma-Svl-non-negative}
For $\gamma\in[1,3]$, $M\in(-1,1)$ and $a>0$, we have $S_{\mathrm{vl}}^{+} > 0$.
\end{lemma}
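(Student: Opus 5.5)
We prove the claim directly from the closed form \eqref{eq-VanLeer-S}. Write
\[
S_{\mathrm{vl}}^{+} = \frac{a^2 (M+1)^3}{32\gamma(\gamma+1)}\, P_S(\gamma,M),\qquad
P_S(\gamma,M) = 3\gamma^2+14\gamma+3 - 4(\gamma-1)\gamma M^2 + (9\gamma^2-10\gamma-3)M ,
\]
obtained by absorbing the leading minus sign into the bracket. For $a>0$, $\gamma\ge1$ and $M\in(-1,1)$ the prefactor $a^2(M+1)^3/(32\gamma(\gamma+1))$ is strictly positive. The claim therefore reduces to showing $P_S(\gamma,M)>0$ on $[1,3]\times(-1,1)$.

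For fixed $\gamma$, $P_S$ is a quadratic in $M$ with leading coefficient $-4\gamma(\gamma-1)\le 0$. It is therefore concave in $M$, or linear when $\gamma=1$. A concave function on an interval attains its minimum at an endpoint, so it suffices to check $M=\pm1$. Positivity on the closed interval $[-1,1]$ then gives positivity on the open interval.

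At $M=1$:
\[
P_S(\gamma,1)=3\gamma^2+14\gamma+3-4\gamma^2+4\gamma+9\gamma^2-10\gamma-3 = 8\gamma^2+8\gamma = 8\gamma(\gamma+1)>0 .
\]
At $M=-1$:
\[
P_S(\gamma,-1)=3\gamma^2+14\gamma+3-4\gamma^2+4\gamma-9\gamma^2+10\gamma+3=-10\gamma^2+28\gamma+6 .
\]
This is a concave quadratic in $\gamma$, so on $[1,3]$ its minimum is at an endpoint. Its values are $24$ at $\gamma=1$ and $-90+84+6=0$ at $\gamma=3$.

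The only obstacle is the corner $\gamma=3$, $M\to-1$, where $P_S$ vanishes. This corner is excluded because $M>-1$ strictly. To make the argument airtight, I would factor at $\gamma=3$: there $P_S(3,M)=-24M^2+48M+72=-24(M-3)(M+1)$, which is positive for $M\in(-1,1)$.

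For $\gamma<3$ both endpoint values are strictly positive. Concavity then gives $P_S>0$ on all of $[-1,1]$.

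Combining these cases, $P_S>0$ throughout the admissible region, and hence $S_{\mathrm{vl}}^{+}>0$. Note that $S_{\mathrm{vl}}^{+}\to0$ as $M\to-1^{+}$, because of the factor $(M+1)^3$. This is consistent with the split flux vanishing at $M=-1$.
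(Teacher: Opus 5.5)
Your proposal is correct and follows essentially the same route as the paper. The paper keeps the bracket with the opposite sign, so it is convex in $M$ and shown to be $\le 0$ by checking $M=\pm1$. You negate it and argue by concavity, which is the same argument. Your explicit factorization $P_S(3,M)=-24(M-3)(M+1)$ handles the strict inequality at the corner $\gamma=3$ more carefully than the paper's one-line remark that equality occurs only at $\gamma=3$, $M=-1$.
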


\begin{proof}
$S_{\mathrm{vl}}^{+}  = -\dfrac{a^2 (M+1)^3}{32\gamma(\gamma+1)} P_S(\gamma,M)$ with  
\[
P_S(\gamma,M) = -3\gamma^2-14\gamma + 4(\gamma-1)\gamma M^2 + (-9\gamma^2+10\gamma+3)M - 3.
\]
For fixed $\gamma$, $P_S$ is a quadratic in $M$ with leading coefficient $4(\gamma-1)\gamma\ge0$, hence convex. Its values at the endpoints are  
\[
P_S(\gamma,1) = -8\gamma(\gamma+1)<0,\qquad 
P_S(\gamma,-1) = 10\gamma^2-28\gamma-6.
\]
For $\gamma\in[1,3]$, $f(\gamma)=10\gamma^2-28\gamma-6$ attains a maximum of $0$ at $\gamma=3$ and is negative otherwise. Thus $P_S(\gamma,-1)\le0$. Because a convex function attains its maximum at an endpoint, we have $P_S(\gamma,M)\le0$ for all $M\in[-1,1]$. Since $(M+1)^3>0$ for $M>-1$, the factor $-\dfrac{a^2 (M+1)^3}{32\gamma(\gamma+1)}$ is negative, making $S_{\mathrm{vl}}^{+} \ge0$. Equality occurs only when $\gamma=3$ and $M=-1$, thus, if $M \in (-1,1)$, then $S_{\mathrm{vl}}^{+} >0$.  
\end{proof}

\begin{lemma}[Reality of eigenvalues for Van Leer]\label{lemma-vl-discriminant-real}
For $\gamma\in[1,3]$, $M\in(-1,1)$ and $a>0$, the discriminant $\Delta_{\mathrm{vl}} = \left(T_{\mathrm{vl}}^{+}\right)^2 - 4S_{\mathrm{vl}}^{+}$ is non-negative, so the two non-zero eigenvalues are real.
\end{lemma}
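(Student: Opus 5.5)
The plan is to strip the common positive factors from $\Delta_{\mathrm{vl}}$ and reduce the lemma to the non-negativity of a single two-variable polynomial on a box. I will reuse the factorisations from the proofs of Lemmas~\ref{lemma-Tvl-positive} and \ref{lemma-Svl-non-negative}, namely $P_T=(M+1)R(\gamma,M)$ and $S_{\mathrm{vl}}^{+}=-\frac{a^2(M+1)^3}{32\gamma(\gamma+1)}P_S$. With these,
\[
\Delta_{\mathrm{vl}}=\frac{a^2(M+1)^2}{64\gamma^2(\gamma+1)^2}\,G(\gamma,M),\qquad
G(\gamma,M):=R(\gamma,M)^2+8\gamma(\gamma+1)(M+1)\,P_S(\gamma,M).
\]
The prefactor is strictly positive for $a>0$ and $M>-1$, so it suffices to show $G\ge 0$ on $[1,3]\times(-1,1)$. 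Here $G$ is a polynomial of degree $6$ in $M$ with coefficients polynomial in $\gamma$. Since $P_S\le 0$ by Lemma~\ref{lemma-Svl-non-negative}, the two terms compete, so there is genuine work to do.

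First I check the endpoints. At $M=-1$ the second term vanishes, so $G=R(\gamma,-1)^2>0$, using $R(\gamma,-1)=4(2\gamma^2+\gamma+3)$ from the proof of Lemma~\ref{lemma-Tvl-positive}. At $M=1$ one finds $R(\gamma,1)=12\gamma(\gamma+1)$ and $P_S(\gamma,1)=-8\gamma(\gamma+1)$, hence $G=144\gamma^2(\gamma+1)^2-128\gamma^2(\gamma+1)^2=16\gamma^2(\gamma+1)^2>0$. Since $G$ is positive at both ends, it suffices to show that $G(\gamma,\cdot)$ has no root of odd multiplicity in $(-1,1)$ for every fixed $\gamma\in[1,3]$.

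To rule out such roots, I would construct the Sturm sequence of $G(\gamma,\cdot)$ in $M$: $G_0=G$, $G_1=\partial_M G$, and $G_{k+1}=-\operatorname{rem}(G_{k-1},G_k)$. Each remainder is then cleared of positive denominators in $\gamma$. The next step is to evaluate the number of sign changes $V(\gamma,-1)$ and $V(\gamma,1)$, with the aim of proving $V(\gamma,-1)=V(\gamma,1)$ for all $\gamma\in[1,3]$. By Sturm's theorem this gives zero distinct real roots in $(-1,1)$. Combined with the positive endpoint values, this yields $G>0$; a root of even multiplicity would show up as a nontrivial gcd, which I would check separately.

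The evaluations at $M=\pm1$ of the Sturm polynomials are rational functions of $\gamma$ alone. Their signs on $[1,3]$ can be settled by computing their real roots (again via Sturm or explicit factorisation) with Mathematica. The main obstacle is that the Sturm chain is parametric: leading coefficients of intermediate remainders may vanish at particular values of $\gamma$ in $[1,3]$, which changes the degree structure of the chain. I expect such degeneracies possibly at $\gamma=1$, where $G$ drops degree, or at $\gamma=3$, where $P_S(3,-1)=0$. I would treat those exceptional $\gamma$-values separately, computing the chain directly at that $\gamma$, and split $[1,3]$ into subintervals on which every sign is constant.

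As a fallback, if the chain becomes unwieldy, I would substitute $M=-1+2s$ with $s\in[0,1]$ and $\gamma=1+2g$ with $g\in[0,1]$. I would then expand $G$ in the bivariate Bernstein basis, subdividing the box if necessary, and show that all Bernstein coefficients are positive. This also gives $G>0$ on the closed box.
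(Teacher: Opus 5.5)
Your plan is essentially the paper's proof: your $G=R^2+8\gamma(\gamma+1)(M+1)P_S$ expands to exactly the paper's sextic $H(\gamma,M)$. The paper likewise checks $H(\gamma,\pm1)>0$ and then uses a parametric Sturm chain in $M$ to show $V(-1)=V(1)=3$. The paper carries out that chain only for $\gamma\in(1,3)$, so your plan to treat $\gamma=1$ and $\gamma=3$ separately fills a real hole, and it works: $H(1,M)=64(M-2)^2$ and $H(3,M)=36(M^3-M^2-5M+13)^2$, both positive on $(-1,1)$.
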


\begin{proof}
Substituting the expressions of $T_{\mathrm{vl}}^{+}$ and $S_{\mathrm{vl}}^{+}$ and simplifying 
(using the computer algebra system Mathematica) yields
\begin{align}\label{eq-VanLeer-discriminant}
\Delta_{\mathrm{vl}}&=\frac{a^2(M+1)^2}{64 \gamma ^2 (\gamma +1)^2}H(\gamma,M), \notag \\
H(\gamma,M)&= (\gamma -1)^2 \gamma ^2 M^6+\left(-2 \gamma ^4+4 \gamma ^3-2 \gamma ^2\right) M^5+\left(-5 \gamma ^4-2 \gamma ^3+19 \gamma ^2-12 \gamma \right) M^4  \notag \\
& +\left(20 \gamma ^4-44 \gamma ^2+24 \gamma \right) M^3+\left(-13 \gamma ^4+26 \gamma ^3+39 \gamma ^2-24 \gamma +36\right) M^2  \notag \\
& +\left(-42 \gamma ^4-20 \gamma ^3-50 \gamma ^2-72 \gamma -72\right) M
+\left(57 \gamma ^4+26 \gamma ^3+53 \gamma ^2+84 \gamma+36\right).  
\end{align}
The factor $a^{2}(M+1)^{2}/[64\gamma^{2}(\gamma+1)^{2}]$ is clearly positive for the parameters 
under consideration. It therefore remains to prove that $H(\gamma,M)\ge 0$ for all 
$\gamma\in[1,3]$ and $M\in(-1,1)$.
\par
A direct inspection shows that for every $\gamma>0$, it holds that 
\begin{align*}
H(\gamma, 0)&=57 \gamma^4+26 \gamma^3+53 \gamma^2+84 \gamma+36 > 0, \\
H(\gamma, 1)&=16 \gamma^2(\gamma+1)^2 > 0, \\
H(\gamma,-1)&=16 \left(2 \gamma^2+\gamma+3\right)^2=64 \gamma^4+64 \gamma^3+208 \gamma^2+96 \gamma+144 > 0. 
\end{align*}
To establish that $H$ does not change sign on the open interval open interval $(-1,1)$, we construct its Sturm sequence \cite{kurosh1972higher} with respect to $M$ (treating $\gamma$ as a parameter). 
The sequence has been obtained with the aid of Mathematica; the explicit polynomials and a detailed sign analysis are presented in Appendix~\ref{app:sturm}. 
That analysis shows that the number of sign changes at $M=-1$ and $M=1$ is the same for every $\gamma\in(1,3)$, hence $H$ has no real root in $(-1,1)$. 
Combined with the positivity at the endpoints, this yields $H(\gamma,M)>0$ for all interior points. 
\par
Therefore $\Delta_{\mathrm{vl}}\ge 0$, with equality occurring only on the boundary $M=-1$, and the two non-zero eigenvalues $\mu_{2,3}$ are real. 
(Additional numerical evidence supporting the non-negativity of $\Delta_{\mathrm{vl}}$ is also provided in Appendix~\ref{app:discriminant}.) 
\end{proof}

\subsection{Main result for Van Leer}
\begin{theorem}[Eigenvalues of the Van Leer splitting]\label{thm-vl-eig}
For the Van Leer flux-vector splitting, under the conditions $\gamma\in[1,3]$, $|M|<1$, and $a>0$, the Jacobian matrix $\partial F^+/\partial U$ has one zero eigenvalue and two positive real eigenvalues. Consequently, $\partial F^+/\partial U$ satisfies the required non-negativity property.
\end{theorem}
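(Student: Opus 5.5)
The theorem will follow by assembling the three lemmas of this section with the factorisation of the characteristic polynomial from Section~\ref{sec:rank_deficiency}; no new computation is needed.

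\emph{Zero eigenvalue.} For $|M|<1$ we have $f^+_{\mathrm{vl},1}=\rho a M^+>0$. Hence the algebraic relation $f_{\mathrm{vl},3}^+ = \frac{\gamma^2}{2(\gamma^2-1)}(f_{\mathrm{vl},2}^+)^2/f_{\mathrm{vl},1}^+$ can be differentiated. This shows that the third row of $J^+_{\mathrm{vl},U}$ is a linear combination of the first two, so $\det J^+_{\mathrm{vl},U}=0$. The characteristic polynomial therefore factors as
\[
\mu\left(\mu^2-T^+_{\mathrm{vl}}\mu+S^+_{\mathrm{vl}}\right).
\]
Consequently $\mu_1=0$, and the other two eigenvalues $\mu_{2,3}$ are the roots of the quadratic factor, so that $\mu_2+\mu_3=T^+_{\mathrm{vl}}$ and $\mu_2\mu_3=S^+_{\mathrm{vl}}$.

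\emph{Sign of the remaining eigenvalues.} The argument is organised around the quadratic factor.
\begin{itemize}
\item By Lemma~\ref{lemma-vl-discriminant-real}, the discriminant $\Delta_{\mathrm{vl}}=(T^+_{\mathrm{vl}})^2-4S^+_{\mathrm{vl}}\ge0$ on the parameter range, so $\mu_2,\mu_3$ are real.
\item By Lemma~\ref{lemma-Svl-non-negative}, $\mu_2\mu_3=S^+_{\mathrm{vl}}>0$. Thus neither root vanishes, and the two roots have the same sign.
\item By Lemma~\ref{lemma-Tvl-positive}, $\mu_2+\mu_3=T^+_{\mathrm{vl}}>0$. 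Hence that common sign is positive.
\end{itemize}
Together with $\mu_1=0$, this shows that the spectrum consists of exactly one zero eigenvalue and two positive real eigenvalues. In particular, all eigenvalues are non-negative, which is the required property.

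\emph{Main point to watch.} The step I expect to need care is the parameter range. Lemma~\ref{lemma-vl-discriminant-real} rests on the Sturm-sequence argument of Appendix~\ref{app:sturm}, which is stated for $\gamma\in(1,3)$, whereas the theorem claims $\gamma\in[1,3]$.
\begin{itemize}
\item At $\gamma=1$ the splitting contains the factor $1/(\gamma^2-1)$, so the flux itself is singular there. I will interpret the endpoint as a limiting case of the coefficient formulas \eqref{eq-VanLeer-T}--\eqref{eq-VanLeer-S}, which remain finite at $\gamma=1$.
\item At the endpoint values of $\gamma$, I will check $H(\gamma,M)\ge0$ directly. For $\gamma=1$ this reduces to $H(1,M)=36M^2-264M+256$. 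The roots of this quadratic are $(11\pm\sqrt{57})/3$, both of which exceed $1$, so $H(1,M)>0$ on $(-1,1)$.
\item For $\gamma=3$, I will substitute into $H(\gamma,M)$ and check positivity of the resulting one-variable polynomial on $(-1,1)$. Failing a clean direct check, I will use continuity of $H$ in $\gamma$, but continuity alone only yields $H(3,M)\ge0$, which still suffices for $\Delta_{\mathrm{vl}}\ge0$.
\end{itemize}
Either way, $\Delta_{\mathrm{vl}}\ge0$ holds on the closed $\gamma$-range, and the conclusion above is unchanged.
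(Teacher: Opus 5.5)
Your argument is the paper's own proof: rank deficiency gives $\det J^+_{\mathrm{vl},U}=0$, and the three lemmas give real roots of the quadratic factor with positive product and positive sum. Your use of the strict bound $S^+_{\mathrm{vl}}>0$ is in fact what ``positive'' requires; the paper's proof only writes $S^+_{\mathrm{vl}}\ge 0$. Your endpoint check contains an arithmetic slip, though the conclusion is unaffected: $H(1,M)=64M^2-256M+256=64(M-2)^2$, not $36M^2-264M+256$, as you can confirm against $H(\gamma,1)=16\gamma^2(\gamma+1)^2=64$. The endpoint $\gamma=3$ is equally clean, since $H(3,M)=36(M^3-M^2-5M+13)^2\ge 36\cdot 8^2>0$ on $[-1,1]$.
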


\begin{proof}
From Lemma \ref{lemma-Tvl-positive} we have $T_{\mathrm{vl}}^{+} >0$, from Lemma \ref{lemma-Svl-non-negative} we have $S_{\mathrm{vl}}^{+} \ge0$, and from Lemma \ref{lemma-vl-discriminant-real} the eigenvalues are real. Since $\mu_2\mu_3 = S_{\mathrm{vl}}^{+} \ge0$ and $\mu_2+\mu_3 = T_{\mathrm{vl}}^{+} >0$, both $\mu_2$ and $\mu_3$ must be positive. Together with the zero eigenvalue, this proves the theorem.
\end{proof}

\section{Spectral Analysis of the Original AUSM Splittings}\label{sec-AUSM}
In this section we analyse \textbf{two variants of the original AUSM scheme proposed by Liou and Steffen~\cite{liou1993new}. As for the Van Leer case, we focus on the positive part \(F^+\); }the analysis for \(F^-\) follows by symmetry. 

\subsection{AUSM (Liou-Steffen) with linear pressure splitting}

\subsubsection{Definition of the splitting}

The original AUSM scheme splits the flux as

\[
F^\pm_{\mathrm{lin}} = \rho a M^\pm \begin{pmatrix}1 \\ u \\ H\end{pmatrix} + P^\pm \begin{pmatrix}0 \\ 1 \\ 0\end{pmatrix},
\]

with $M = u/a$ and the following definitions in the subsonic regime $|M|\le1$:

\[
M^+ = \frac{(M+1)^2}{4},\quad M^- = -\frac{(M-1)^2}{4},\quad
P^+ = \frac{P(1+M)}{2},\quad P^- = \frac{P(1-M)}{2}.
\]

\subsubsection{Jacobian in primitive variables and Transformation to conservative variables}
Proceeding as before, we compute $\partial F^+_{\mathrm{lin}}/\partial W$ and then transform to conservative variables: 
\begin{align}
\frac{\partial F^+_{\mathrm{lin}}}{\partial W} &= 
\left(
\begin{array}{ccc}
 \frac{1}{4} a (M+1)^2 & \frac{1}{4} (M+1)^2 \rho  & \frac{1}{2} a (M+1) \rho  \\
 \frac{a^2 (M+1) \left(\gamma  M^2+\gamma  M+2\right)}{4 \gamma } & \frac{a (M+1) \rho  \left(\gamma  M^2+\gamma  M+2\right)}{2 \gamma } & \frac{a^2 \rho  \left(\gamma +3 \gamma  M^2+4 \gamma  M+2\right)}{4 \gamma } \\
 \frac{a^3 (M+1)^2 \left((\gamma -1) M^2+2\right)}{8 (\gamma -1)} & \frac{3 a^2 (M+1)^2 \rho  \left((\gamma -1) M^2+2\right)}{8 (\gamma -1)} & \frac{a^3 (M+1) \rho  \left(2 (\gamma -1) M^2+(\gamma -1) M+2\right)}{4 (\gamma -1)} \\
\end{array}
\right),  
\\
J^+_{\mathrm{lin},U}&:=\frac{\partial F^+_{\mathrm{lin}}}{\partial U} = \frac{\partial F^+_{\mathrm{lin}}}{\partial W}\left(\frac{\partial U}{\partial W}\right)^{-1}=\frac{\partial F^+_{\mathrm{lin}}}{\partial W}\mathfrak{T}. 
\end{align}
The explicit expressions of $J^+_{\mathrm{lin},U}$  is provided in Appendix \ref{app:ausm-linear}.

\subsubsection{Coefficients of the characteristic polynomial}
With the aid of Mathematica, we obtain compact expressions of the trace, sum of second-order principal minors and determinant of $J^+_{\mathrm{lin},U}$: 

\begin{subequations}
\begin{align}
T_{\mathrm{lin}}^{+}  &= \frac{a}{8\gamma}\Big[-\gamma^2(M^2-3) + \gamma(7M^2+12M+3) + 4\Big], \label{eq-AUSM-2ndP-T} \\[4mm]
S_{\mathrm{lin}}^{+}  &= \frac{a^2 (M+1)^2}{32\gamma}\Big[\gamma^2(3M^2-2M-5) + \gamma(-9M^2-10M+1) - 2\Big], \label{eq-AUSM-2ndP-S} \\[4mm]
D_{\mathrm{lin}}^{+} &= -\frac{a^3 (M+1)^4}{64}\Big[(\gamma-2)M^2 - (\gamma+1)M + (2-\gamma)\Big]. \label{eq-AUSM-2ndP-D}
\end{align}
\end{subequations}
\textbf{Note that for AUSM there is no algebraic relation forcing a zero eigenvalue; in general $\det(\partial F^+_{\mathrm{lin}}/\partial U) = D_{\mathrm{lin}}^{+} \neq 0$. }Thus the characteristic equation is a full cubic:
\[
\mu^3 - T_{\mathrm{lin}}^{+} \mu^2 + S_{\mathrm{lin}}^{+} \mu - D_{\mathrm{lin}}^{+} = 0. 
\]

\subsubsection{Sign of the coefficients}

\begin{lemma}[Positivity of $T_{\mathrm{lin}}^{+} $ for linear AUSM]\label{lemma-Tlin-positive}
For $\gamma>1$, $|M|<1$, and $a>0$, we have $T_{\mathrm{lin}}^{+} >0$.
\end{lemma}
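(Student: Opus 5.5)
Since $a>0$ and $\gamma>1$, the prefactor $a/(8\gamma)$ in \eqref{eq-AUSM-2ndP-T} is positive. It therefore suffices to show that the bracket
\[
P(\gamma,M) = -\gamma^2(M^2-3) + \gamma(7M^2+12M+3) + 4
\]
is strictly positive for $\gamma>1$ and $|M|<1$. The plan is to treat $P$ as a polynomial in $M$ with $\gamma$ as a parameter, and split into cases by the sign of $M$. This is the same strategy used in Lemma~\ref{lemma-Tvl-positive}.

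\textbf{Case $M\ge 0$.} Group the terms as
\[
P = \gamma^2(3-M^2) + \gamma(7M^2+12M+3) + 4 .
\]
Since $3-M^2>2>0$ for $|M|<1$, and $7M^2+12M+3>0$ for $M\ge 0$, every term is positive. Hence $P>0$ in this case.

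\textbf{Case $M<0$.} Write $P$ in powers of $M$:
\[
P(\gamma,M) = (7\gamma-\gamma^2)M^2 + 12\gamma M + (3\gamma^2+3\gamma+4).
\]
The middle term $12\gamma M$ is negative here, and this is the step that needs the most care. The leading coefficient $7\gamma-\gamma^2$ can have either sign, so I will not rely on convexity in $M$. Instead I bound the terms directly for $M\in(-1,0)$:
\begin{itemize}
\item $-\gamma^2 M^2 > -\gamma^2$, since $M^2<1$;
\item $7\gamma M^2 \ge 0$;
\item $12\gamma M > -12\gamma$.
\end{itemize}
These give
\[
P > -\gamma^2 - 12\gamma + 3\gamma^2 + 3\gamma + 4 = 2\gamma^2 - 9\gamma + 4 .
\]
This bound is negative near $\gamma\approx 2$, so it is too crude, because it discards $7\gamma M^2$ and does not use the endpoint values jointly. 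I will refine it in the way that exploits the quadratic structure.

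For fixed $\gamma$, $P$ is a quadratic in $M$. Its endpoint values are
\[
P(\gamma,1) = 2\gamma^2 + 22\gamma + 4, \qquad P(\gamma,-1) = 2\gamma^2 - 2\gamma + 4 .
\]
The first is clearly positive. The second equals $2(\gamma-\tfrac12)^2 + \tfrac72 > 0$. I then split on the sign of the leading coefficient $7\gamma-\gamma^2$.

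\textbf{Sub-case $\gamma\ge 7$.} The leading coefficient is $\le 0$, so $P$ is concave in $M$ and its minimum on $[-1,1]$ is attained at an endpoint. Both endpoint values are positive, so $P>0$.

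\textbf{Sub-case $1<\gamma<7$.} The leading coefficient is positive, so $P$ is convex in $M$. Its vertex is at
\[
M^* = -\frac{6}{7-\gamma} .
\]
Because $7-\gamma<6$, we have $M^*<-1$, so the vertex lies to the left of the interval. Hence $P$ is increasing on $[-1,1]$ and its minimum is $P(\gamma,-1)>0$.

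Combining all cases gives $P>0$, and therefore $T_{\mathrm{lin}}^{+}>0$. The only delicate point is checking the vertex location in the convex sub-case. Note that the argument in fact covers all $M\in[-1,1]$ uniformly, so the separate treatment of $M\ge 0$ is not strictly needed.
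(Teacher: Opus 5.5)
Your proof is correct and follows the paper's own route: write the bracket as a quadratic in $M$, use convexity for $1<\gamma<7$ with vertex $-6/(7-\gamma)<-1$, and conclude from the minimum value $2\gamma^2-2\gamma+4>0$ at $M=-1$. Your concave sub-case $\gamma\ge 7$ is a small genuine improvement, since the paper only justifies $\gamma(7-\gamma)>0$ for $\gamma\in[1,3]$ even though the lemma is stated for all $\gamma>1$; the separate $M\ge 0$ case and the discarded crude bound are unnecessary and can be dropped.
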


\begin{proof}
Rewrite $T_{\mathrm{lin}}^{+} $ as
\[
T_{\mathrm{lin}}^{+}  = \frac{a}{8\gamma}\Big[ (-\gamma^2+7\gamma)M^2 + 12\gamma M + (3\gamma^2+3\gamma+4) \Big].
\]
The quadratic in $M$ has leading coefficient $-\gamma^2+7\gamma = \gamma(7-\gamma)>0$ for $\gamma\in[1,3]$, hence it is convex. Its vertex is at $M_v = -\frac{12\gamma}{2(-\gamma^2+7\gamma)} = -\frac{6}{7-\gamma} < -1$. Therefore on $[-1,1]$ the quadratic is strictly increasing, and its minimum at $M=-1$ is
\[
P_T(-1) = (-\gamma^2+7\gamma)(1) + 12\gamma(-1) + (3\gamma^2+3\gamma+4) = 2\gamma^2 - 2\gamma + 4 > 0.
\]
Thus $T_{\mathrm{lin}}^{+} >0$ for all admissible $M$.
\end{proof}

\begin{lemma}[Sign of $S_{\mathrm{lin}}^{+} $ for linear AUSM]\label{lemma-AUSM-Slin-sign-change}
For $\gamma>1$, $|M|<1$, and $a>0$, the coefficient $S_{\mathrm{lin}}^{+} $ changes sign: there exists $M_0\in(-1,0)$ such that $S_{\mathrm{lin}}^{+} >0$ for $M\in(-1,M_0)$ and $S_{\mathrm{lin}}^{+} <0$ for $M\in(M_0,1)$. In particular, $S_{\mathrm{lin}}^{+} $ is not of constant sign.
\end{lemma}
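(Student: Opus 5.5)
The plan is to factor out the manifestly positive prefactor and reduce the claim to a sign analysis of a single quadratic in $M$ with $\gamma$ as a parameter, using only endpoint values and a root count. Write
\[
S_{\mathrm{lin}}^{+}=\frac{a^2(M+1)^2}{32\gamma}\,Q(\gamma,M),\qquad
Q(\gamma,M)=3\gamma(\gamma-3)M^2-2\gamma(\gamma+5)M-(5\gamma^2-\gamma+2),
\]
which is a regrouping of \eqref{eq-AUSM-2ndP-S} by powers of $M$. For $a>0$, $\gamma>1$ and $M\in(-1,1)$ the prefactor $a^2(M+1)^2/(32\gamma)$ is strictly positive. Hence $S_{\mathrm{lin}}^{+}$ has the same sign as $Q(\gamma,M)$, and it suffices to study $Q$ on $(-1,1)$.

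The key step is to evaluate $Q$ at three points:
\[
Q(\gamma,-1)=2(\gamma-1)>0,\qquad
Q(\gamma,0)=-(5\gamma^2-\gamma+2)<0,\qquad
Q(\gamma,1)=-(4\gamma^2+18\gamma+2)<0 .
\]
The first is positive because $\gamma>1$. The second is negative because $5\gamma^2-\gamma+2$ has negative discriminant, and the third is clearly negative. By continuity, $Q(\gamma,\cdot)$ has a root $M_0\in(-1,0)$.

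Next I would show this root is the only one in $(-1,1)$. The leading coefficient $3\gamma(\gamma-3)$ may be negative, zero or positive according to $\gamma<3$, $\gamma=3$ or $\gamma>3$. Rather than split into convex and concave cases, I would argue as follows.
\begin{itemize}
\item $Q(\gamma,\cdot)$ is a polynomial of degree at most two and is not identically zero.
\item It takes opposite signs at $M=-1$ and $M=1$, so it has an odd number of roots in $(-1,1)$, counted with multiplicity.
\item With at most two roots available, that number must be exactly one.
\end{itemize}
In the degenerate case $\gamma=3$, $Q$ is linear with nonzero slope $-48$, so the conclusion holds directly. Thus $M_0$ is the unique zero of $Q$ in $(-1,1)$, and it is a simple zero. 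Since $Q(\gamma,0)<0$, in fact $M_0\in(-1,0)$.

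The sign pattern then follows from continuity together with uniqueness of the zero. We have $Q>0$ on $(-1,M_0)$, inheriting the sign of $Q(\gamma,-1)$, and $Q<0$ on $(M_0,1)$, inheriting the sign of $Q(\gamma,1)$. Multiplying by the positive prefactor gives the stated behaviour of $S_{\mathrm{lin}}^{+}$, so it is not of constant sign.

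The only delicate point is the uniqueness argument, because the convexity of $Q$ flips at $\gamma=3$. The parity argument above handles all $\gamma>1$ uniformly, so I expect no real obstacle beyond double-checking the endpoint arithmetic.
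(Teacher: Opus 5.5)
Your argument is correct as a sign analysis of the displayed expression, but it obtains uniqueness of $M_0$ by a different mechanism than the paper.

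\paragraph{Comparison with the paper.} The paper restricts to $1<\gamma<3$, where the leading coefficient $3\gamma(\gamma-3)$ is negative. It evaluates only at $M=-1$ and $M=0$, and then asserts a unique root in $(-1,0)$ and negativity on $(M_0,1)$ \emph{by continuity}. What actually carries this is concavity: the superlevel set $\{P_S\ge 0\}$ of a concave function is an interval containing $-1$ but not $0$. The cases $\gamma=3$ and $\gamma>3$, which the hypothesis $\gamma>1$ includes, are never treated.

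Your parity count says that a nonzero polynomial of degree at most two with opposite signs at $M=\pm1$ has exactly one zero in $(-1,1)$, counted with multiplicity. It costs one extra evaluation, $Q(\gamma,1)<0$. In return it is uniform in $\gamma$, needs no convex/concave case split, and gives simplicity of the root. So it proves the lemma on its full stated range, which the paper's proof does not.

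\paragraph{A problem with the formula itself.} Both proofs take the displayed formula for $S_{\mathrm{lin}}^{+}$ on trust, and it appears to have the wrong overall sign. At $M=0$ the appendix entries are $B_{11}=a/8$, $B_{12}=1/2$, $B_{13}=\gamma(\gamma-1)/(8a)$, $B_{21}=0$, $B_{22}=a(\gamma+2)/(4\gamma)$, $B_{23}=(\gamma-1)/2$, $B_{31}=-a^3/(8(\gamma-1))$, $B_{32}=a^2/(2(\gamma-1))$ and $B_{33}=3a\gamma/8$. These agree with a direct differentiation of the flux at $u=0$.

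The three principal $2\times2$ minors are then $\frac{a^2(\gamma+2)}{32\gamma}$, $\frac{a^2\gamma}{16}$ and $\frac{a^2(3\gamma-2)}{32}$, all positive. Their sum is $\frac{a^2(5\gamma^2-\gamma+2)}{32\gamma}$, which is exactly the negative of the displayed value. Extracting the coefficient of $\mu$ from $\det\bigl(\partial F^+_{\mathrm{lin}}/\partial W-\mu\,\partial U/\partial W\bigr)=\det(\partial U/\partial W)\,\det(J^+_{\mathrm{lin},U}-\mu I)$ gives in general
\[
S_{\mathrm{lin}}^{+}=-\frac{a^2(M+1)^2}{32\gamma}\,Q(\gamma,M),
\]
with your $Q$. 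The leading minus sign carried by the Van Leer and second-order formulas was apparently dropped here.

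Hence, for the actual sum of principal minors, the sign pattern in the lemma is reversed: $S_{\mathrm{lin}}^{+}<0$ on $(-1,M_0)$ and $S_{\mathrm{lin}}^{+}>0$ on $(M_0,1)$. Your analysis of $Q$ transfers verbatim to this corrected statement. The fact that $S_{\mathrm{lin}}^{+}$ is not of constant sign, which is the only thing Theorem~\ref{thm-AUSM-lin-eig} uses, survives.
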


\begin{proof}
The factor in brackets of $S_{\mathrm{lin}}^{+} $ is
\[
P_S(M) = \gamma^2(3M^2-2M-5) + \gamma(-9M^2-10M+1) - 2.
\]
This is a quadratic in $M$:
\[
P_S(M) = (3\gamma^2-9\gamma)M^2 + (-2\gamma^2-10\gamma)M + (-5\gamma^2+\gamma-2).
\]
For $1<\gamma<3$, the leading coefficient $3\gamma(\gamma-3)$ is negative, so $P_S$ is concave. Evaluating at the endpoints:
\begin{align*}
P_S(-1) &= 3\gamma^2-9\gamma + 2\gamma^2+10\gamma -5\gamma^2+\gamma-2 = 2\gamma-2 > 0,
\\
P_S(0) &= -5\gamma^2+\gamma-2 < 0 \quad (\text{since } \gamma>1).
\end{align*}
By continuity, there exists a unique $M_0\in(-1,0)$ such that $P_S(M_0)=0$. Hence $P_S(M)>0$ for $M\in(-1,M_0)$ and $P_S(M)<0$ for $M\in(M_0,1)$. Since $S_{\mathrm{lin}}^{+}  = \frac{a^2 (M+1)^2}{32\gamma} P_S(M)$ and the factor $\frac{a^2 (M+1)^2}{32\gamma}>0$, the sign of $S_{\mathrm{lin}}^{+} $ follows the sign of $P_S$. Therefore $S_{\mathrm{lin}}^{+} $ changes sign as claimed.
\end{proof}

\begin{lemma}[Sign of $D_{\mathrm{lin}}^{+}$ for linear AUSM]\label{lemma-AUSM-Dlin-sign-change}
For $\gamma>1$, $|M|<1$, and $a>0$, the determinant $D_{\mathrm{lin}}^{+}$ changes sign: $D_{\mathrm{lin}}^{+}<0$ for $M$ near $-1$, $D_{\mathrm{lin}}^{+}>0$ for $M$ near $1$. In particular, $D_{\mathrm{lin}}^{+}$ is not of constant sign.
\end{lemma}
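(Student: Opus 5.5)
The plan is to read off the sign of $D_{\mathrm{lin}}^{+}$ from the factored form \eqref{eq-AUSM-2ndP-D}, in the same way as in Lemma~\ref{lemma-AUSM-Slin-sign-change}. Write
\[
D_{\mathrm{lin}}^{+} = -\frac{a^3 (M+1)^4}{64}\, Q(M),\qquad Q(M) = (\gamma-2)M^2 - (\gamma+1)M + (2-\gamma).
\]
For $a>0$ and $M\in(-1,1)$ the prefactor $a^3(M+1)^4/64$ is strictly positive. Hence $\operatorname{sign} D_{\mathrm{lin}}^{+} = -\operatorname{sign} Q(M)$, and the whole question reduces to the sign of the quadratic $Q$ in $M$.

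Next I evaluate $Q$ at the endpoints:
\[
Q(-1) = (\gamma-2) + (\gamma+1) + (2-\gamma) = \gamma+1 > 0,\qquad
Q(1) = (\gamma-2) - (\gamma+1) + (2-\gamma) = -(\gamma+1) < 0 .
\]
Both values are nonzero for every $\gamma>1$. Since $Q$ is continuous, $Q>0$ on some interval $(-1,-1+\delta)$ and $Q<0$ on some interval $(1-\delta,1)$. Multiplying by $-a^3(M+1)^4/64<0$ gives $D_{\mathrm{lin}}^{+}<0$ for $M$ near $-1$ and $D_{\mathrm{lin}}^{+}>0$ for $M$ near $1$. By the intermediate value theorem, $Q$, and hence $D_{\mathrm{lin}}^{+}$, vanishes somewhere in $(-1,1)$, so $D_{\mathrm{lin}}^{+}$ is not of constant sign.

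There is no real obstacle here. The only point needing care is that the prefactor $(M+1)^4$ vanishes at $M=-1$. For that reason the sign conclusion near $-1$ must come from $Q(-1)>0$ via continuity on the open interval, not from evaluating $D_{\mathrm{lin}}^{+}$ at the endpoint itself.

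Optionally, I can make the sign-change point explicit. Note $Q(M)=(\gamma-2)(M^2-1)-(\gamma+1)M$, so on $(-1,1)$ the equation $Q=0$ becomes $(\gamma-2)(M^2-1)=(\gamma+1)M$. If $\gamma=2$ the root is exactly $M=0$. Otherwise the product of the roots of $Q$ equals $-1$, so exactly one root lies in $(-1,1)$. This gives a unique crossing point, but that refinement is not needed for the statement.
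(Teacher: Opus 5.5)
Your proof is correct and follows the paper's argument: you factor out the positive prefactor $a^3(M+1)^4/64$, evaluate $Q(-1)=\gamma+1>0$ and $Q(1)=-(\gamma+1)<0$, and conclude the sign change by continuity. Your two additions are valid refinements that the paper does not make explicit: you get the sign near $M=-1$ from continuity of $Q$ rather than from $D_{\mathrm{lin}}^{+}$ itself (which vanishes there), and you show the crossing point is unique because the product of the roots of $Q$ is $-1$.
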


\begin{proof}
The factor
\[
Q(M) = (\gamma-2)M^2 - (\gamma+1)M + (2-\gamma)
\]
satisfies
\[
Q(-1) = (\gamma-2) + (\gamma+1) + (2-\gamma) = \gamma+1 > 0,
\]
\[
Q(1) = (\gamma-2) - (\gamma+1) + (2-\gamma) = -\gamma-1 < 0.
\]
Thus $Q(M)$ changes sign in $(-1,1)$. Since $D_{\mathrm{lin}}^{+} = -\frac{a^3 (M+1)^4}{64} Q(M)$, we have $D_{\mathrm{lin}}^{+}<0$ near $M=-1$ (where $Q>0$) and $D_{\mathrm{lin}}^{+}>0$ near $M=1$ (where $Q<0$). Hence $D_{\mathrm{lin}}^{+}$ is not sign-definite.
\end{proof}

\subsection{AUSM (Liou-Steffen) with second-order pressure splitting}

\subsubsection{Definition of the splitting}
A variant employs a second-order pressure splitting analogous to Van Leer's construction:
\[
P^+ = \frac{P}{4}(M+1)^2(2-M),\qquad P^- = \frac{P}{4}(M-1)^2(2+M),\qquad |M|\le1.
\]
The convective splitting $M^\pm$ remains the same as in the linear case.

\subsubsection{Jacobian in primitive variables and Transformation to conservative variables}
Again we compute $\partial F^+_{\mathrm{2nd}}/\partial W$ and transform to conservative variables: 
\begin{align}
\frac{\partial F^+_{\mathrm{2nd}}}{\partial W}&=
\left(
\begin{array}{ccc}
 \frac{1}{4} a (M+1)^2 & \frac{1}{4} (M+1)^2 \rho  & \frac{1}{2} a (M+1) \rho  \\
 \frac{a^2 (M+1)^2 ((\gamma -1) M+2)}{4 \gamma } & \frac{a (M+1)^2 \rho  ((\gamma -1) M+2)}{2 \gamma } & \frac{a^2 (M+1) \rho  (\gamma +3 (\gamma -1) M+3)}{4 \gamma } \\
 \frac{a^3 (M+1)^2 \left((\gamma -1) M^2+2\right)}{8 (\gamma -1)} & \frac{3 a^2 (M+1)^2 \rho  \left((\gamma -1) M^2+2\right)}{8 (\gamma -1)} & \frac{a^3 (M+1) \rho  \left(2 (\gamma -1) M^2+(\gamma -1) M+2\right)}{4 (\gamma -1)} \\
\end{array}
\right), 
\\
J^+_{\mathrm{2nd},U}&:=\frac{\partial F^+_{\mathrm{2nd}}}{\partial U} = \frac{\partial F^+_{\mathrm{2nd}}}{\partial W}\left(\frac{\partial U}{\partial W}\right)^{-1}=\frac{\partial F^+_{\mathrm{2nd}}}{\partial W}\mathfrak{T}.  
\end{align}

\subsubsection{Coefficients of the characteristic polynomial}
After simplification with the aid of Mathematica, we obtain
\begin{subequations}
\begin{align}
T_{\mathrm{2nd}}^{+}  &= \frac{a}{8\gamma}\Big[3(\gamma^2+\gamma+2) - (\gamma-1)\gamma M^4 - 2(\gamma^2-4\gamma+3)M^2 + 12\gamma M\Big], \\[4mm]
S_{\mathrm{2nd}}^{+}  &= -\frac{a^2 (M+1)^3}{32\gamma}\Big[ -5\gamma^2-2\gamma + (\gamma-1)\gamma M^3 + (\gamma-1)\gamma M^2 + (3\gamma^2-4\gamma+3)M - 3 \Big], \\[4mm]
D_{\mathrm{2nd}}^{+} &= -\frac{1}{64}a^3(\gamma-1)(M-1)(M+1)^6.
\end{align}	
\end{subequations}

\subsubsection{Sign of the coefficients}

\begin{lemma}[Positivity of $T_{\mathrm{2nd}}^{+} $ for second-order AUSM]\label{lemma-AUSM-T2nd-positive}
For $\gamma>1$, $|M|<1$, and $a>0$, we have $T_{\mathrm{2nd}}^{+} >0$.
\end{lemma}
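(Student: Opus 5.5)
The plan is to mirror the proof of Lemma~\ref{lemma-Tvl-positive}: factor out $(M+1)$ and show the cofactor is positive by a termwise sign argument. Since $a/(8\gamma)>0$, it suffices to show
\[
P(\gamma,M)=3(\gamma^2+\gamma+2)-(\gamma-1)\gamma M^4-2(\gamma-1)(\gamma-3)M^2+12\gamma M>0
\]
for $\gamma>1$ and $M\in(-1,1)$. Note that $\gamma^2-4\gamma+3=(\gamma-1)(\gamma-3)$.

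\textbf{Step 1 (root at $M=-1$).} A direct evaluation gives
\[
P(\gamma,-1)=3\gamma^2+3\gamma+6-(\gamma^2-\gamma)-2(\gamma^2-4\gamma+3)-12\gamma=0 .
\]
So $M=-1$ is a root for every $\gamma$, and $P$ factors as $P=(M+1)R(\gamma,M)$.

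\textbf{Step 2 (the cofactor).} Set $c=(\gamma-1)\gamma$ and $d=2(\gamma-1)(\gamma-3)$, so that $c+d=3(\gamma-1)(\gamma-2)$. Synthetic division of $-cM^4-dM^2+12\gamma M+3(\gamma^2+\gamma+2)$ by $M+1$ gives
\[
R(\gamma,M)=cM^2(1-M)+3(\gamma-1)(\gamma-2)(1-M)+12\gamma .
\]
I would double-check this expansion, or simply expand $(M+1)R$, with Mathematica, consistent with the paper's practice.

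\textbf{Step 3 (sign of $R$).} For $|M|<1$ we have $1-M\in(0,2)$, and the first term $cM^2(1-M)$ is non-negative because $c\ge0$.
\begin{itemize}
\item If $\gamma\ge2$, the middle term is also non-negative, so $R\ge 12\gamma>0$.
\item If $1<\gamma<2$, then $(\gamma-1)(\gamma-2)\in[-\tfrac14,0)$, so the middle term is bounded below by $-\tfrac34\cdot 2=-\tfrac32$. Hence $R>12\gamma-\tfrac32>0$.
\end{itemize}
So $R>0$ throughout. Since $M+1>0$ on $(-1,1)$, it follows that $P>0$ and therefore $T_{\mathrm{2nd}}^{+}>0$.

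\textbf{Main obstacle.} The only delicate point is obtaining the factorization of Step~2 correctly. Without the factor $(M+1)$, the quartic vanishes at $M=-1$, so no crude endpoint or convexity bound can work near that endpoint. Once the factor is extracted, the positivity argument is elementary. Unlike Lemma~\ref{lemma-Tvl-positive}, it needs no restriction $\gamma\le3$, which matches the hypothesis $\gamma>1$.
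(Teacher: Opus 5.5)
Your proof is correct, but it takes a different route from the paper's. The paper never divides by $M+1$. It uses $M^4\le M^2$ (with $(\gamma-1)\gamma\ge 0$) to replace the quartic by the quadratic minorant $Q(\gamma,M)=3(\gamma^2+\gamma+2)+12\gamma M-3(\gamma-1)(\gamma-2)M^2$. It then notes $Q(\gamma,-1)=0$ and $Q(\gamma,1)=24\gamma$, and splits into two cases: the convex case $1<\gamma\le 2$, where the vertex lies left of $-1$ so $Q$ is increasing on $[-1,1]$, and the concave case $\gamma>2$.

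The two arguments are closely linked. Since $P-Q=(\gamma-1)\gamma M^2(1-M)(1+M)$ and $Q=(M+1)\bigl[3(\gamma-1)(\gamma-2)(1-M)+12\gamma\bigr]$, your cofactor $R$ is exactly the paper's discarded term divided by $M+1$, plus $Q/(M+1)$. What your route buys:
\begin{itemize}
\item The paper's convexity and vertex case analysis becomes a termwise bound on a function linear in $M$.
\item The case $\gamma=2$ needs no special handling, whereas the paper's vertex formula divides by zero there.
\item You get the explicit bound $P>(M+1)(12\gamma-\tfrac32)$.
\item The argument matches the style of the proof of Lemma~\ref{lemma-Tvl-positive}.
\end{itemize}
The paper's version avoids polynomial division, at the cost of a case split.

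One remark in your ``main obstacle'' paragraph is overstated. A convexity argument does work despite the zero at $M=-1$; the paper's proof is exactly such an argument. Monotonicity in the convex case, or concavity with endpoint values $0$ and $24\gamma$, already gives strict positivity on the open interval.
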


\begin{proof}
Write $T_{\mathrm{2nd}}^{+}  = \frac{a}{8\gamma} P_T(M)$ with
\[
P_T(M) = 3(\gamma^2+\gamma+2) - (\gamma-1)\gamma M^4 - 2(\gamma^2-4\gamma+3)M^2 + 12\gamma M.
\]
Since $M^4\le M^2$ for $|M|\le1$, we have
\[
P_T(M) \ge 3(\gamma^2+\gamma+2) + 12\gamma M - \big[(\gamma-1)\gamma + 2(\gamma^2-4\gamma+3)\big] M^2.
\]
The coefficient in brackets simplifies to $3(\gamma-1)(\gamma-2)$. Thus
\[
P_T(M) \ge Q(\gamma,M) := 3(\gamma^2+\gamma+2) + 12\gamma M - 3(\gamma-1)(\gamma-2)M^2.
\]
Now analyse $Q(\gamma,M)$.
\begin{itemize}
    \item If $1<\gamma\le 2$, then $(\gamma-1)(\gamma-2)\le0$, so $Q$ is convex. Its vertex $M_0 = \frac{2\gamma}{(\gamma-1)(\gamma-2)} < -1$, hence $Q$ is increasing on $[-1,1]$. The minimum at $M=-1$ is $Q(-1)=0$, so $Q(M)>0$ for $M>-1$.
    \item If $\gamma>2$, then $(\gamma-1)(\gamma-2)>0$, $Q$ is concave, $Q(-1)=0$, $Q(1)=24\gamma>0$, so $Q(M)>0$ for $M\in(-1,1)$.
\end{itemize}
Thus $P_T(M) \ge Q(\gamma,M) > 0$, and consequently $T_{\mathrm{2nd}}^{+} >0$.
\end{proof}

\begin{lemma}[Positivity of $S_{\mathrm{2nd}}^{+} $ for second-order AUSM]\label{lemma-AUSM-S2nd-positive}
For $\gamma>1$, $|M|<1$, and $a>0$, we have $S_{\mathrm{2nd}}^{+} >0$.
\end{lemma}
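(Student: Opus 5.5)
The plan is to reduce the claim to a sign statement about the bracketed polynomial, exactly as in Lemma~\ref{lemma-Svl-non-negative}. Write
\[
S_{\mathrm{2nd}}^{+} = -\frac{a^2 (M+1)^3}{32\gamma}\,P_S(\gamma,M),\qquad
P_S(\gamma,M) = -5\gamma^2-2\gamma + (\gamma-1)\gamma M^3 + (\gamma-1)\gamma M^2 + (3\gamma^2-4\gamma+3)M - 3 .
\]
For $a>0$, $\gamma>1$ and $M>-1$, the prefactor $-a^2(M+1)^3/(32\gamma)$ is strictly negative. It therefore suffices to prove $P_S(\gamma,M)<0$ for all $M\in(-1,1)$.

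I will bound the cubic and quadratic terms together rather than study the cubic's critical points. They combine as $(\gamma-1)\gamma\,M^2(M+1)$, and $(\gamma-1)\gamma>0$. On $(-1,1)$ we have $0\le M^2<1$ and $0<M+1<2$, so $M^2(M+1)<2$. Hence
\[
P_S(\gamma,M) < -5\gamma^2-2\gamma + 2(\gamma-1)\gamma + (3\gamma^2-4\gamma+3)M - 3 .
\]

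The right-hand side is linear in $M$. Its slope $3\gamma^2-4\gamma+3$ has discriminant $16-36<0$, so the slope is positive for every $\gamma$. The right-hand side is thus increasing in $M$, and for $M<1$ it is strictly less than its value at $M=1$:
\[
-5\gamma^2-2\gamma+2\gamma^2-2\gamma+3\gamma^2-4\gamma+3-3 = -8\gamma < 0 .
\]
This gives $P_S(\gamma,M)<0$ on $(-1,1)$, and therefore $S_{\mathrm{2nd}}^{+}>0$.

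As a consistency check, the endpoint values are $P_S(\gamma,1)=-8\gamma$ and $P_S(\gamma,-1)=-8\gamma^2+2\gamma-6$. Both are negative, which agrees with the bound.

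The step that needs the most care is the estimate $M^2(M+1)<2$. It relies on the positivity of $(\gamma-1)\gamma$, which is where the hypothesis $\gamma>1$ enters. If a sharper estimate were ever needed, I would fall back on the convexity argument in $M$ used for Lemma~\ref{lemma-Svl-non-negative}, but the crude bound above already closes the argument.
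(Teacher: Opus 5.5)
Your proof is correct, and it takes a different, more elementary route than the paper.

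The paper shows that the bracket $R(\gamma,M)$ is strictly increasing in $M$. It computes $\partial_M R = 3(\gamma-1)\gamma M^2 + 2(\gamma-1)\gamma M + (3\gamma^2-4\gamma+3)$ and shows its discriminant $4(\gamma-1)\gamma(-8\gamma^2+11\gamma-9)$ is negative. That step needs a second discriminant computation, $121-288<0$. The paper then concludes from $R(\gamma,1)=-8\gamma<0$.

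You avoid derivatives altogether. You group the cubic and quadratic terms as $(\gamma-1)\gamma M^2(M+1)$ and bound $M^2(M+1)<2$, so you only need positivity of the linear coefficient $3\gamma^2-4\gamma+3$. Both grouped pieces attain their maximum over $[-1,1]$ at $M=1$, so your estimate loses nothing. It yields the same sharp bound $R(\gamma,M)<-8\gamma=R(\gamma,1)$ as the paper. It also does not depend on any monotonicity, which matters because $M^2(M+1)$ itself is not monotone on $(-1,1)$.

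What the paper's argument gives in addition is the structural fact that $R(\gamma,\cdot)$ is strictly increasing, so $R$ lies between $R(\gamma,-1)$ and $R(\gamma,1)$. The lemma needs only the upper bound. So in both arguments the evaluation at $M=-1$ is not actually needed; it serves only as a check.
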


\begin{proof}
Let 
\[
R(\gamma,M) = -5\gamma^2-2\gamma + (\gamma-1)\gamma M^3 + (\gamma-1)\gamma M^2 + (3\gamma^2-4\gamma+3)M - 3,
\]
so that $S_{\mathrm{2nd}}^{+}  = -\frac{a^2 (M+1)^3}{32\gamma} R(\gamma,M)$. First evaluate $R$ at the endpoints:
\begin{align*}
R(\gamma,-1) &= -5\gamma^2-2\gamma -(\gamma-1)\gamma + (\gamma-1)\gamma - (3\gamma^2-4\gamma+3) - 3 = -8\gamma^2 + 2\gamma -6 < 0,
\\
R(\gamma,1) &= -5\gamma^2-2\gamma + (\gamma-1)\gamma + (\gamma-1)\gamma + (3\gamma^2-4\gamma+3) - 3 = -8\gamma < 0.
\end{align*}
Now compute the derivative with respect to $M$: 
\[
\partial_M R(\gamma,M) = 3(\gamma-1)\gamma M^2 + 2(\gamma-1)\gamma M + (3\gamma^2-4\gamma+3).
\]
This is a quadratic in $M$ with positive leading coefficient $3(\gamma-1)\gamma$. Its discriminant is
\[
\Delta_R = 4(\gamma-1)^2\gamma^2 - 12(\gamma-1)\gamma(3\gamma^2-4\gamma+3) = 4(\gamma-1)\gamma\big[-8\gamma^2+11\gamma-9\big].
\]
The quadratic $-8\gamma^2+11\gamma-9$ has discriminant $11^2-4(-8)(-9)=121-288=-167<0$, and its leading coefficient is negative, so it is always negative. Hence $\Delta_R<0$, meaning $\partial_M R$ has no real roots and is always positive (since its leading coefficient is positive). Therefore $R$ is strictly increasing in $M$. Because $R(\gamma,-1)<0$ and $R(\gamma,1)<0$, we conclude $R(\gamma,M)<0$ for all $M\in(-1,1)$. The factor $-\frac{a^2 (M+1)^3}{32\gamma}$ is negative (since $(M+1)^3>0$, $\gamma>0$), so $S_{\mathrm{2nd}}^{+} >0$.
\end{proof}

\begin{lemma}[Positivity of $D_{\mathrm{2nd}}^{+}$ for second-order AUSM]\label{lemma-AUSM-D2nd-positive}
For $\gamma>1$, $|M|<1$, and $a>0$, we have $D_{\mathrm{2nd}}^{+}>0$.
\end{lemma}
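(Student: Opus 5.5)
The plan is to read the sign directly off the closed form already obtained with Mathematica,
\[
D_{\mathrm{2nd}}^{+} = -\frac{1}{64}a^3(\gamma-1)(M-1)(M+1)^6 .
\]
Since this expression is fully factored, no convexity or Sturm-type argument is needed, unlike Lemmas~\ref{lemma-AUSM-T2nd-positive} and \ref{lemma-AUSM-S2nd-positive}. We simply check the sign of each factor under the hypotheses $\gamma>1$, $|M|<1$, $a>0$:
\begin{itemize}
\item $a^3>0$ because $a>0$;
\item $\gamma-1>0$ because $\gamma>1$;
\item $(M+1)^6>0$ because it is an even power and $M\neq -1$;
\item $M-1<0$ because $M<1$.
\end{itemize}
Hence the product $a^3(\gamma-1)(M-1)(M+1)^6$ is strictly negative. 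The prefactor $-\tfrac{1}{64}$ flips this sign, so
\[
D_{\mathrm{2nd}}^{+}=\frac{1}{64}a^3(\gamma-1)(1-M)(1+M)^6>0 .
\]

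The only substantive point is trusting the factored form of the determinant. As an independent check, I would compute the two determinants separately, using
\[
\det J^+_{\mathrm{2nd},U}=\det\!\left(\frac{\partial F^+_{\mathrm{2nd}}}{\partial W}\right)\Big/\det\!\left(\frac{\partial U}{\partial W}\right),
\qquad
\det\!\left(\frac{\partial U}{\partial W}\right) = -\frac{2\rho^2a^2}{\gamma(\gamma-1)} .
\]
For the primitive-variable Jacobian, the factors $(M+1)$ and the powers of $\rho$ and $a$ can be pulled out of the rows and columns. The remaining $3\times3$ determinant should then reduce to a multiple of $(M-1)(M+1)$.

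This step is routine bookkeeping and I do not expect it to be an obstacle. The equality case $D_{\mathrm{2nd}}^{+}=0$ occurs only on the excluded boundary $M=\pm1$, or in the degenerate limit $\gamma=1$.
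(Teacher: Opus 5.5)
Your proposal is correct and takes the same approach as the paper, whose proof simply reads the sign of $D_{\mathrm{2nd}}^{+}=-\frac{1}{64}a^3(\gamma-1)(M-1)(M+1)^6$ off its factors, just as you do. Your cross-check via $\det(\partial F^+_{\mathrm{2nd}}/\partial W)/\det(\partial U/\partial W)$ is a worthwhile addition that the paper omits, and it works: the primitive-variable determinant comes out as $\rho^2a^5(M+1)^6(M-1)/(32\gamma)$, which reproduces the stated formula.
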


\begin{proof}
% \[
% D_{\mathrm{2nd}}^{+} = -\frac{1}{64}a^3(\gamma-1)(M-1)(M+1)^6.
% \]
% All factors are positive except $-\frac{1}{64}<0$ and $(M-1)<0$ (since $M<1$). Thus
% \[
% \left(-\frac{1}{64}\right) \times a^3 \times (\gamma-1) \times (M-1) \times (M+1)^6
% \]
% gives a product of two negatives ($-\frac{1}{64}$ and $M-1$) and all other factors positive, hence $D_{\mathrm{2nd}}^{+}>0$.
This follows immediately from the expression of $D_{\mathrm{2nd}}^{+}$. 
\end{proof}

\subsubsection{Discriminant and reality of eigenvalues}
For the cubic $\mu^3 - T_{\mathrm{2nd}}^{+} \mu^2 + S_{\mathrm{2nd}}^{+} \mu - D_{\mathrm{2nd}}^{+}=0$, the discriminant is given by
\[
\Delta_{\mathrm{2nd}} = 18 T_{\mathrm{2nd}}^{+}  S_{\mathrm{2nd}}^{+}  D_{\mathrm{2nd}}^{+} - 4 \left(T_{\mathrm{2nd}}^{+}\right)^3 D_{\mathrm{2nd}}^{+} + \left(T_{\mathrm{2nd}}^{+}\right)^2 \left(S_{\mathrm{2nd}}^{+}\right)^2 - 4 \left(S_{\mathrm{2nd}}^{+}\right)^3 - 27 \left(D_{\mathrm{2nd}}^{+}\right)^2.
\]
A direct algebraic manipulation shows that \(\Delta_{\mathrm{2nd}}\) can be written as a rational function whose numerator is a polynomial in \(M\) of high degree, with coefficients depending polynomially on \(\gamma\). 
Constructing a Sturm sequence for this polynomial would involve extremely lengthy expressions and is impractical for a rigorous analytical proof; even with a computer algebra system the symbolic computation becomes prohibitively expensive and may exceed memory limits. 
\textbf{Therefore we resort to a numerical verification, which is presented in Appendix~\ref{app:discriminant}. }
The numerical experiments, performed with MATLAB on a sample of \(10^6\) random points in the region \(\gamma\in[1,3]\), \(M\in(-1,1)\), show that \(\Delta_{\mathrm{2nd}}\) is always positive to within machine precision, and the only zeros occur on the boundary \(M=-1\). 
\textbf{This provides strong evidence that \(\Delta_{\mathrm{2nd}}\ge 0\) throughout the interior of the domain, implying that the three eigenvalues are real. }

\subsection{Main results for AUSM}
\begin{theorem}[Eigenvalues of the linear AUSM splitting]\label{thm-AUSM-lin-eig}
For the linear AUSM splitting, under the conditions $\gamma>1$, $|M|<1$, and $a>0$, the Jacobian matrix $\partial F^+/\partial U$ has eigenvalues that are not all of the same sign. Specifically, the coefficients $S_{\mathrm{lin}}^{+} $ and $D_{\mathrm{lin}}^{+}$ change sign over the interval $(-1,1)$, so the eigenvalues cannot all be non-negative (nor all non-positive).
\end{theorem}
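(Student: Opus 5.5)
The theorem follows from Lemma~\ref{lemma-AUSM-Slin-sign-change} and Lemma~\ref{lemma-AUSM-Dlin-sign-change} combined with Viète's relations from Section~\ref{sec:eigen-invariants}. The idea is to argue by contraposition. If at some state all three eigenvalues $\mu_1,\mu_2,\mu_3$ of $J^+_{\mathrm{lin},U}$ were non-negative, then, since complex eigenvalues of a real matrix come in conjugate pairs, we should interpret the requirement as all eigenvalues being real and $\ge 0$. Then necessarily $T^+_{\mathrm{lin}}\ge0$, $S^+_{\mathrm{lin}}\ge0$ and $D^+_{\mathrm{lin}}\ge0$. Symmetrically, all eigenvalues $\le0$ would force $T^+_{\mathrm{lin}}\le0$, $S^+_{\mathrm{lin}}\ge0$ and $D^+_{\mathrm{lin}}\le0$. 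So it suffices to exhibit, for each $\gamma>1$, values of $M\in(-1,1)$ at which these necessary inequalities fail.

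\textbf{Step 1: rule out all-non-negative spectra.} By Lemma~\ref{lemma-AUSM-Slin-sign-change}, $S^+_{\mathrm{lin}}<0$ on $(M_0,1)$. This already contradicts $\mu_1\mu_2+\mu_1\mu_3+\mu_2\mu_3\ge0$ there. Independently, Lemma~\ref{lemma-AUSM-Dlin-sign-change} gives $D^+_{\mathrm{lin}}<0$ near $M=-1$, which contradicts $\mu_1\mu_2\mu_3\ge0$ there. Hence on a large part of the subsonic range the eigenvalues cannot all be non-negative.

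\textbf{Step 2: rule out all-non-positive spectra.} Lemma~\ref{lemma-Tlin-positive} gives $T^+_{\mathrm{lin}}>0$ everywhere, so $\mu_1+\mu_2+\mu_3>0$ and the eigenvalues can never all be non-positive. For the statement as phrased, this can be reinforced by noting that $D^+_{\mathrm{lin}}>0$ near $M=1$, also from Lemma~\ref{lemma-AUSM-Dlin-sign-change}. Combining Steps 1 and 2 shows that the required upwind sign property fails.

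\textbf{Main obstacle.} The one delicate point is the range of $\gamma$. Lemma~\ref{lemma-Tlin-positive} was actually proved under $\gamma(7-\gamma)>0$ and with the vertex estimate $6/(7-\gamma)>1$. Likewise, the concavity step in Lemma~\ref{lemma-AUSM-Slin-sign-change} used $1<\gamma<3$. To state the theorem for all $\gamma>1$, I would avoid these convexity arguments. Instead I would rely only on endpoint evaluations: $P_S(-1)=2\gamma-2>0$ and $P_S(0)<0$ give a sign change of $S^+_{\mathrm{lin}}$ by continuity for every $\gamma>1$, and the endpoint values of $Q$ give the same for $D^+_{\mathrm{lin}}$. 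The sign of the trace is not needed to conclude that the eigenvalues are not all non-negative. If necessary, I would restrict the ``not all non-positive'' part to $\gamma\in(1,7)$, or alternatively use $D^+_{\mathrm{lin}}>0$ near $M=1$ combined with $S^+_{\mathrm{lin}}<0$ there to exclude three non-positive real eigenvalues.
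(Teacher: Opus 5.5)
Your argument follows the paper's route: Viète's relations applied to the sign-change lemmas for $S^+_{\mathrm{lin}}$ and $D^+_{\mathrm{lin}}$. Your bookkeeping is more careful than the paper's. The paper's proof says an all-non-positive spectrum would make $S^+_{\mathrm{lin}}$ non-positive. As you note, it actually forces $S^+_{\mathrm{lin}}\ge 0$, so that case has to be excluded through $T^+_{\mathrm{lin}}>0$, or through $D^+_{\mathrm{lin}}>0$ near $M=1$, exactly as you do.

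You also need not restrict to $\gamma<7$. For $\gamma\ge 7$ the quadratic in $T^+_{\mathrm{lin}}$ is concave or linear, so its minimum on $[-1,1]$ is at an endpoint. There it equals $2\gamma^2-2\gamma+4>0$ or $2\gamma^2+22\gamma+4>0$, so $T^+_{\mathrm{lin}}>0$ for every $\gamma>1$.

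One caveat concerns the $S$-half of Step~1. It inherits the sign pattern of Lemma~\ref{lemma-AUSM-Slin-sign-change}, and that pattern appears to be reversed. Take $\gamma=2$, $M=0$, $a=1$. The appendix matrix, which agrees with $\frac{\partial F^+_{\mathrm{lin}}}{\partial W}\mathfrak{T}$ computed directly, is
\[
J^+_{\mathrm{lin},U}=\begin{pmatrix}1/8 & 1/2 & 1/4\\ 0 & 1/2 & 1/2\\ -1/8 & 1/2 & 3/4\end{pmatrix}.
\]
Its principal $2\times 2$ minors sum to $1/16+1/8+1/8=5/16>0$, whereas the displayed formula for $S^+_{\mathrm{lin}}$ gives $-5/16$. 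Its eigenvalues are $0$ and $(11\pm\sqrt{41})/16$, all non-negative, even though $M=0\in(M_0,1)$.

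So the correct picture is $S^+_{\mathrm{lin}}<0$ on $(-1,M_0)$ and $S^+_{\mathrm{lin}}>0$ on $(M_0,1)$. Your sentence that the eigenvalues cannot all be non-negative on $(M_0,1)$ is therefore false. Likewise, drop ``$S^+_{\mathrm{lin}}<0$ there'' from your alternative near $M=1$; $D^+_{\mathrm{lin}}>0$ alone suffices.

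The theorem itself survives:
\begin{itemize}
\item Your endpoint argument for a sign change of $S^+_{\mathrm{lin}}$ does not depend on the overall sign of the formula.
\item The decisive fact is $D^+_{\mathrm{lin}}<0$ near $M=-1$. That formula does agree with the appendix matrix. It forces a negative real eigenvalue without assuming the spectrum is real, since a conjugate pair contributes a positive factor to the determinant.
\end{itemize}

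Finally, what both your argument and the paper's establish is only that the spectrum is not of one sign uniformly over $|M|<1$. The example above shows that the first sentence of the theorem cannot be read pointwise in $M$.
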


\begin{proof}
From Lemmas \ref{lemma-AUSM-Slin-sign-change} and \ref{lemma-AUSM-Dlin-sign-change}, both $S_{\mathrm{lin}}^{+} $ and $D_{\mathrm{lin}}^{+}$ take both positive and negative values for $M\in(-1,1)$. Since $S_{\mathrm{lin}}^{+}  = \mu_1\mu_2+\mu_1\mu_3+\mu_2\mu_3$ and $D_{\mathrm{lin}}^{+} = \mu_1\mu_2\mu_3$, if all eigenvalues were of the same sign, $S_{\mathrm{lin}}^{+} $ and $D_{\mathrm{lin}}^{+}$ would be non-negative (for all non-negative eigenvalues) or non-positive (for all non-positive eigenvalues). The sign changes observed contradict this, proving the theorem.
\end{proof}

\begin{proposition}[Eigenvalues of the second-order AUSM splitting]\label{thm-AUSM-2nd-eig}
\label{prop:ausm2}
For the second-order AUSM splitting, under the conditions \(\gamma>1\), \(|M|<1\) and \(a>0\), all three eigenvalues of \(\partial F^+/\partial U\) are positive real numbers. 
\end{proposition}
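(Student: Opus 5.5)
The plan is to combine the three sign lemmas for the second-order AUSM coefficients with reality of the eigenvalues, and then apply Descartes' rule of signs (equivalently, Viète's relations). By Lemmas \ref{lemma-AUSM-T2nd-positive}, \ref{lemma-AUSM-S2nd-positive} and \ref{lemma-AUSM-D2nd-positive}, for $\gamma>1$, $|M|<1$, $a>0$ we have $T_{\mathrm{2nd}}^{+}>0$, $S_{\mathrm{2nd}}^{+}>0$ and $D_{\mathrm{2nd}}^{+}>0$. Consequently the characteristic polynomial $p(\mu)=\mu^3-T_{\mathrm{2nd}}^{+}\mu^2+S_{\mathrm{2nd}}^{+}\mu-D_{\mathrm{2nd}}^{+}$ satisfies $p(\mu)<0$ for every $\mu\le 0$: each term $\mu^3$, $-T\mu^2$, $S\mu$ is $\le 0$ there, and $-D<0$. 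Hence $p$ has no real root in $(-\infty,0]$. So every real eigenvalue is strictly positive, and in particular $0$ is not an eigenvalue.

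It remains to show that all three roots are real. This is equivalent to $\Delta_{\mathrm{2nd}}\ge 0$, where $\Delta_{\mathrm{2nd}}$ is the cubic discriminant written above. Once this holds, the three real roots are all positive by the previous paragraph. For the reality step I would first clear denominators. Since $T\propto a$, $S\propto a^2(M+1)^3$ and $D\propto a^3(M+1)^6$, the discriminant is homogeneous of degree $6$ in $a$. I expect it to carry a factor $a^6(M+1)^{k}$ for some $k\ge 4$, because every term contains at least $(M+1)^4$ once the factor from $T^2S^2$ and $T^3D$ is accounted for. After removing this factor one is left with a polynomial $G(\gamma,M)$. I would then try to show $G>0$ on $(1,\infty)\times(-1,1)$ by the same route as Lemma \ref{lemma-vl-discriminant-real}:
\begin{itemize}
\item evaluate $G$ at $M=\pm1$ and at $M=0$;
\item substitute $M=-1+2s$ with $s\in(0,1)$ and $\gamma=1+g$ with $g>0$;
\item hope that all coefficients of the resulting bivariate polynomial in $(s,g)$ are nonnegative. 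That would give positivity at once, without a Sturm sequence.
\end{itemize}

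The main obstacle is exactly this discriminant step. The paper itself reports that a Sturm-sequence treatment is infeasible and supplies only numerical evidence (Appendix~\ref{app:discriminant}) on $\gamma\in[1,3]$. The stated range $\gamma>1$ is even larger than the sampled one, so the numerical check does not cover it. If the coefficient-positivity substitution fails, my fallback is a perturbative and structural argument. At $M=-1$ the matrix reduces to a form in which the eigenvalues can be computed explicitly. Near that boundary I would check separation of the roots, and then use continuity together with the nonvanishing of $G$ in the interior, verified by a cylindrical-decomposition or interval-arithmetic certificate on a compact range of $\gamma$ plus an asymptotic analysis as $\gamma\to\infty$.

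Failing a full certificate, the honest conclusion is conditional: all eigenvalues have positive real parts, since $T,S,D>0$ and $p$ has no root in $(-\infty,0]$. The Routh--Hurwitz test $TS>D$ would also need to be checked for that claim. They are real and positive whenever $\Delta_{\mathrm{2nd}}\ge0$, which is supported by Appendix~\ref{app:discriminant}.
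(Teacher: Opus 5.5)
Your sign argument is correct and is the paper's Vi\`ete step in a cleaner form: $T^{+}_{\mathrm{2nd}},S^{+}_{\mathrm{2nd}},D^{+}_{\mathrm{2nd}}>0$ give $p(\mu)<0$ for all $\mu\le 0$, so every real eigenvalue is positive. The genuine gap is the reality of the eigenvalues, $\Delta_{\mathrm{2nd}}\ge 0$. That is the entire nontrivial content of the proposition, and your proposal does not prove it. The paper does not prove it either: its proof rests on random sampling, and only over $\gamma\in[1,3]$. So, as you correctly observe, nothing supports the claim for $\gamma>3$. As you yourself note, the fallback ``positive real parts'' conclusion would additionally need $T^{+}_{\mathrm{2nd}}S^{+}_{\mathrm{2nd}}>D^{+}_{\mathrm{2nd}}$, and that also remains unchecked.

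Moreover, the certificate you propose for $\Delta_{\mathrm{2nd}}$ cannot succeed. Take $a=1$ and write $T=(M+1)\tilde T$, $S=(M+1)^3\tilde S$, $D=(M+1)^6\tilde D$. Every term of the discriminant then contains at least $(M+1)^8$, so $\Delta_{\mathrm{2nd}}=(M+1)^8G(\gamma,M)$, and for $\gamma\ge 1$
\[
G(\gamma,-1)=(\tilde T\tilde S)^2\big|_{M=-1}=\left[\frac{(2\gamma^2-2\gamma+3)(4\gamma^2-\gamma+3)}{32\gamma^2}\right]^2\ge\left(\frac{9}{32}\right)^2 .
\]
At $M=1$ the formulas give $T=3$, $S=2$, $D=0$ (eigenvalues $0,1,2$), hence $G(\gamma,1)=4/2^8=1/64<(9/32)^2$. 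So for every fixed $\gamma$, $G$ is smaller at $s=1$ than at $s=0$. But a polynomial in $s$ with nonnegative coefficients is nondecreasing on $s\ge 0$. Therefore, after $M=-1+2s$, $\gamma=1+g$, some coefficient must be negative, however the positive $\gamma$-denominators are cleared; removing fewer powers of $M+1=2s$ only shifts the coefficients.

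You need a form in which a decreasing profile is no obstruction. One option is $M=(x-1)/(x+1)$ with $x\in(0,\infty)$, multiplied by a power of $x+1$ (a P\'olya/Bernstein-type certificate). Another is a parametric Sturm--Habicht or CAD computation. Either way the certificate must be uniform as $\gamma\to\infty$, since the statement claims all $\gamma>1$. For fixed $|M|<1$ that limit is harmless: the two bounded eigenvalues tend to the distinct values $(1+M)^2/4$ and $(1+M)^3/(2(3+M^2))$, and the third grows like $\gamma(1-M^2)(3+M^2)/8$. However, these two limits merge as $M\to 1$, so the corner $\gamma\to\infty$, $M\to 1$ is where uniformity must be checked. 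Finally, at $M=-1$ the Jacobian vanishes identically, so there is nothing explicit to compute there. Root separation near that boundary (the roots are of orders $1+M$, $(1+M)^2$, $(1+M)^3$) is exactly what $G(\gamma,-1)>0$ expresses.
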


\begin{proof}
The positivity of $T_{\mathrm{2nd}}^{+} $, $S_{\mathrm{2nd}}^{+} $, and $D_{\mathrm{2nd}}^{+}$ follows from Lemmas \ref{lemma-AUSM-T2nd-positive}, \ref{lemma-AUSM-S2nd-positive}, and \ref{lemma-AUSM-D2nd-positive}, respectively. The statement about the discriminant is supported by the numerical experiments reported in Appendix \ref{app:discriminant}.
According to Viète's theorem, three eigenvalues are all positive. 
\end{proof}

\section{Conclusions}\label{sec-Conclusion}
We have provided a detailed mathematical proof that the Van Leer flux-vector splitting satisfies the intended eigenvalue sign condition: $\partial F^+/\partial U$ has one zero eigenvalue and two positive real eigenvalues for all physically relevant parameters. 
This fills a gap in the original presentation. For the AUSM family, we have shown that the linear pressure splitting does \textbf{not} possess this property, whereas a second-order pressure splitting yields coefficients that are all positive. 
Numerical evidence indicates that the discriminant of the cubic for the second-order pressure splitting is non-negative, so that its eigenvalues are real and positive. 
Our work provides a solid foundation for understanding the spectral behaviour of two widely used flux-vector splitting schemes.

\appendix
\section{Numerical verification of the discriminant for the Van Leer splitting and AUSM with second-order pressure splitting}
\label{app:discriminant}
In this appendix we present numerical evidence that the discriminant $\Delta_{\mathrm{vl}}(\gamma,M)$ and $\Delta_{\mathrm{2nd}}(\gamma,M)$ of the characteristic equations for the Van Leer splitting and second-order AUSM splitting are both non-negative throughout the parameter domain $\gamma\in(1,3]$, $M\in(-1,1)$. The two discriminants are given by
\begin{align*}
\Delta_{\mathrm{vl}} &= \left(T_{\mathrm{vl}}^{+}\right)^2 - 4S_{\mathrm{vl}}^{+}, \\
\Delta_{\mathrm{2nd}} &= 18 T_{\mathrm{2nd}}^{+}  S_{\mathrm{2nd}}^{+}  D_{\mathrm{2nd}}^{+} - 4 \left(T_{\mathrm{2nd}}^{+}\right)^3 D_{\mathrm{2nd}}^{+} + \left(T_{\mathrm{2nd}}^{+}\right)^2 \left(S_{\mathrm{2nd}}^{+}\right)^2 - 4 \left(S_{\mathrm{2nd}}^{+}\right)^3 - 27 \left(D_{\mathrm{2nd}}^{+}\right)^2,  	
\end{align*}
where $T_{\mathrm{vl}}^{+}$, $T_{\mathrm{vl}}^{+}$ are defined in Eq.\eqref{eq-VanLeer-T}\eqref{eq-VanLeer-S} while $T_{\mathrm{2nd}}^{+} $, $S_{\mathrm{2nd}}^{+} $, $D_{\mathrm{2nd}}^{+}$ in Eq.\eqref{eq-AUSM-2ndP-T}\eqref{eq-AUSM-2ndP-S}\eqref{eq-AUSM-2ndP-D}. 
We performed a systematic numerical scan over a fine grid in the $(\gamma,M)$ plane, as well as random sampling, and evaluated $\Delta_{\mathrm{vl}}$ and $\Delta_{\mathrm{2nd}}$ at each point. 
Note: According to Eq.\eqref{eq-VanLeer-discriminant},  the sign of $\Delta_{\mathrm{vl}}$ is determined solely by $H(\gamma,M)$. 
Therefore, in the numerical verification of Van Leer splitting, it suffices to sample $H(\gamma,M)$ directly to check its non-negativity. 
Additionally, in all numerical experiments, $a=1$ is taken without loss of generality, as it merely scales the discriminants positively. 
All numerical experiments were carried out using MATLAB. 
\par\noindent
\begin{itemize}
\item \textbf{Numerical verification of the discriminant for the Van Leer splitting: } 
\\
A random sample of $10^6$ points with $\gamma\in[1,3]$, $M\in(-1,1)$ gave no negative values (tolerance $10^{-12}$). 
The smallest observed value of $H$ was approximately $6.46\times10^{1}$, and a numerical optimisation located the minimum at $(\gamma,M)=(1.000,1.000)$ with $H=64.0$, which lies exactly on the boundary $M=1$. 
This confirms that $H$ is strictly positive in the interior of the domain. Figure~\ref{Fig.VanLeer-discriminant-H} shows the three-dimensional surface and a filled contour plot of $H$, visually confirming its positivity. 
\begin{figure}[htbp]
  \begin{center}
    \begin{minipage}{0.49\linewidth}
      \centerline{\includegraphics[width=1\linewidth]{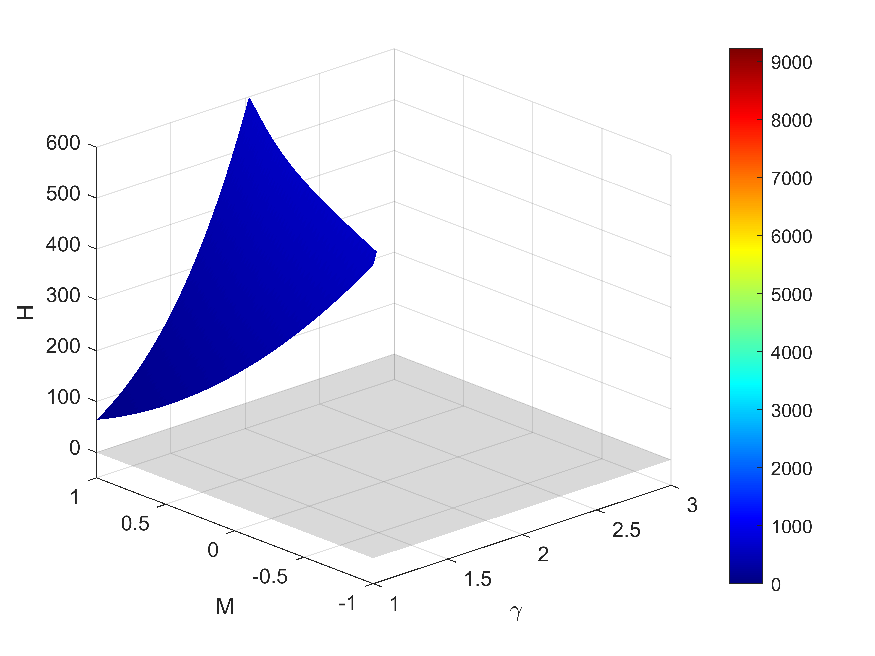}}
    \end{minipage}
    \hfill
    \begin{minipage}{0.49\linewidth}
      \centerline{\includegraphics[width=1\linewidth]{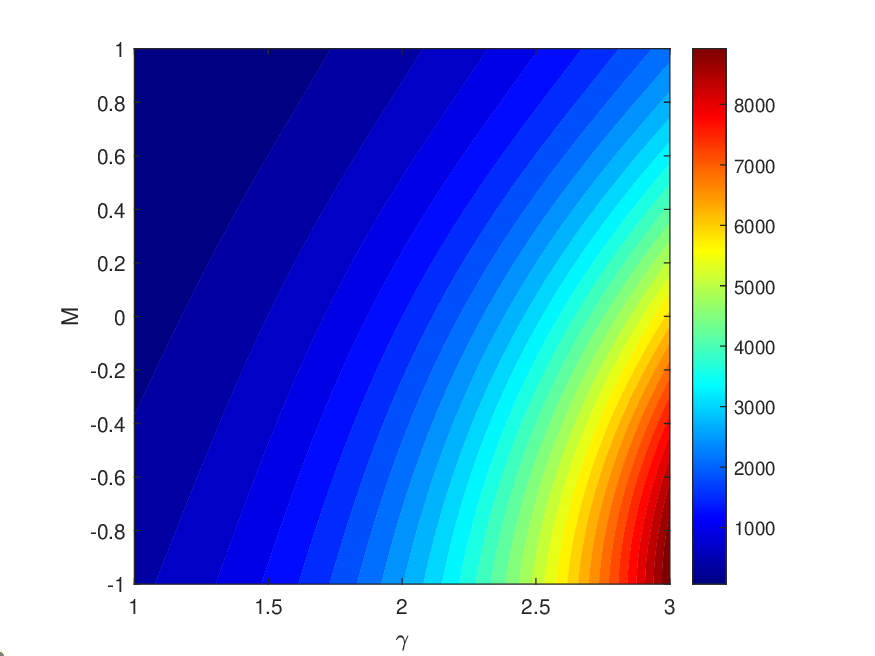}}
    \end{minipage}
    \vfill
    \begin{minipage}{0.49\linewidth}
      \centerline{\tiny{(a)\ 3D surface of $H(\gamma,M)$ with zero plane}}
    \end{minipage}
    \hfill
    \begin{minipage}{0.49\linewidth}
      \centerline{\tiny{(b)\ Filled contour plot of $H(\gamma,M)$}}
    \end{minipage}
  \end{center}
\caption{Numerical verification of $H(\gamma,M) \geq 0$ for Van Leer Splitting. }
\label{Fig.VanLeer-discriminant-H}
\end{figure}

\item \textbf{Numerical verification of the discriminant for AUSM with second-order pressure splitting: }
\\
Again setting $a=1$, we generated $10^6$ random points in the same parameter domain. All computed values of $\Delta_{\mathrm{2nd}}$ were positive to within numerical accuracy; the smallest value observed was $1.30\times10^{-42}$, which is essentially zero at the level of machine precision. An optimisation search found the minimum to be exactly $0$ at $(\gamma\approx2.114,\,M=-1.000)$, i.e. on the boundary $M=-1$. Hence $\Delta$ is strictly positive in the interior. 
Figure~\ref{Fig.AUSM-2ndP-discriminant} displays the three-dimensional surface and the corresponding contour plot, clearly showing that $\Delta_{\mathrm{2nd}}$ stays above zero. 
\begin{figure}[htbp]
  \begin{center}
    \begin{minipage}{0.49\linewidth}
      \centerline{\includegraphics[width=1\linewidth]{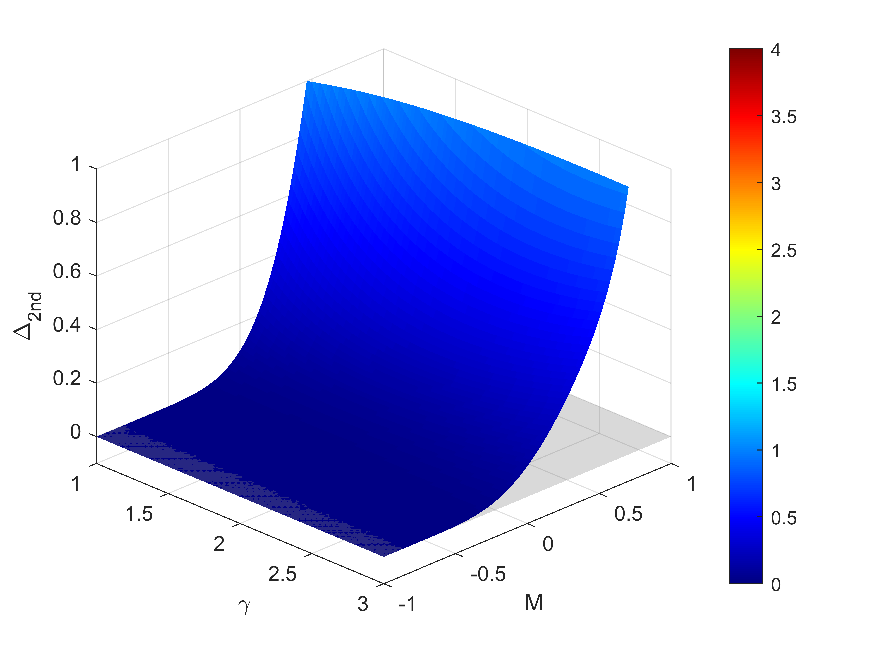}}
    \end{minipage}
    \hfill
    \begin{minipage}{0.49\linewidth}
      \centerline{\includegraphics[width=1\linewidth]{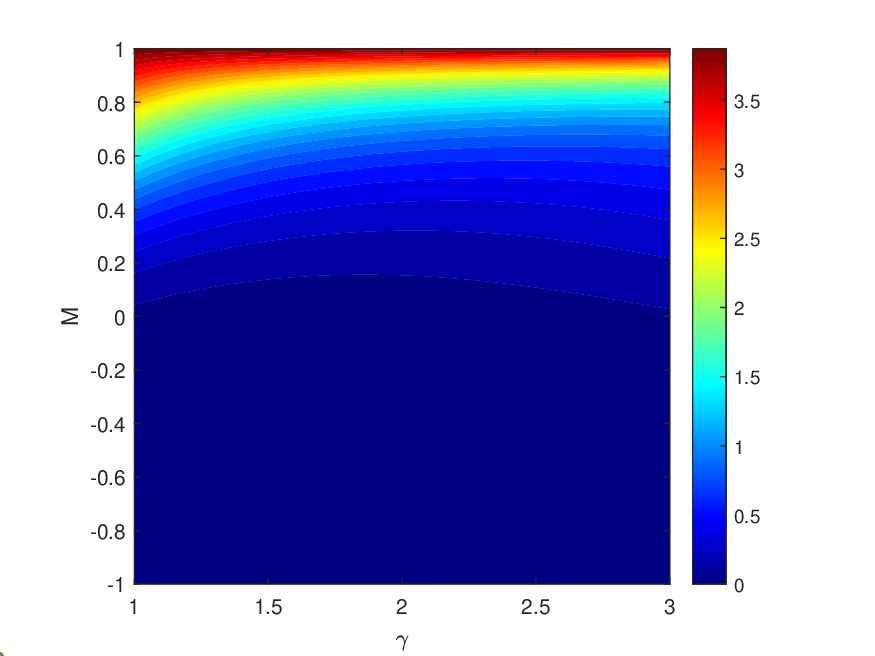}}
    \end{minipage}
    \vfill
    \begin{minipage}{0.49\linewidth}
      \centerline{\tiny{(a)\ 3D surface of $\Delta_{\mathrm{2nd}}(\gamma,M)$ with zero plane}}
    \end{minipage}
    \hfill
    \begin{minipage}{0.49\linewidth}
      \centerline{\tiny{(b)\ Filled contour plot of $\Delta_{\mathrm{2nd}}(\gamma,M)$}}
    \end{minipage}
  \end{center}
\caption{Numerical verification of $\Delta_{\mathrm{2nd}}(\gamma,M) \geq 0$ for AUSM with second-order pressure splitting. }
\label{Fig.AUSM-2ndP-discriminant}
\end{figure}
\end{itemize}
\par
The numerical experiments strongly support the theoretical conclusion that both discriminants are non-negative throughout the region of interest, implying that the corresponding eigenvalues are real.

\section{Explicit Jacobian matrices}
\label{app:jacobians}

For completeness, we list here the explicit expressions of the Jacobian matrices \(\partial F^+/\partial U\) in conservative variables for the three splittings studied in this paper. 
% In each case the matrix is obtained by transforming the primitive-variable Jacobian \(\partial F^+/\partial W\) with the inverse of \(\partial U/\partial W\), i.e.
% \[
% \frac{\partial F^+}{\partial U} = \frac{\partial F^+}{\partial W}\left(\frac{\partial U}{\partial W}\right)^{-1}.
% \]
All entries are functions of \(\gamma\), \(a\) and \(M\) only (the density \(\rho\) cancels out during the transformation). The expressions have been simplified with the aid of the computer algebra system Mathematica.  
As in the main text, we only present the matrices for the positive part \(F^+\); the matrices for \(F^-\) can be obtained by symmetry. 

\subsection{Van Leer splitting}
\label{app:vanleer}
\[
J^+_{\mathrm{vl},U}=\frac{\partial F^+_{\mathrm{vl}}}{\partial U} = [A_{ij}]_{i,j=1,2,3},
\]
with
{\scriptsize
\begin{align}
A_{11}&=-\frac{1}{16} a \left(M^2-1\right) \left((\gamma -1) \gamma  M^2+2\right), \notag \\ 
A_{12}&=\frac{1}{8} \left((\gamma -1) \gamma  M^3+\left(-\gamma ^2+\gamma +4\right) M+4\right), \notag \\ 
A_{13}&=-\frac{(\gamma -1) \gamma  (M-1) (M+1)}{8 a}, \notag \\
A_{21}&=-\frac{a^2 M (M+1) \left(2 (\gamma +3)+(\gamma -1)^2 \gamma  M^3-(\gamma -1)^2 \gamma  M^2-2 \left(2 \gamma ^2-5 \gamma +3\right) M\right)}{16 \gamma }, \notag \\ 
A_{22}&=\frac{a \left(2 (\gamma +3)+(\gamma -1)^2 \gamma  M^4-\left(\gamma ^3+2 \gamma ^2-9 \gamma +6\right) M^2-4 (\gamma -3) \gamma  M\right)}{8 \gamma }, \notag \\ 
A_{23}&=-\frac{1}{8} (\gamma -1) (M+1) \left((\gamma -1) M^2-\gamma  M+M-4\right), \notag \\
A_{31}&=-\frac{a^3 (M+1) \left((\gamma -1)^3 \gamma  M^5-(\gamma -1)^3 \gamma  M^4+\left(-8 \gamma ^3+22 \gamma ^2-24 \gamma +10\right) M^3+\left(-6 \gamma ^2+32 \gamma -26\right) M^2+8 (2 \gamma +1) M+8\right)}{32 \left(\gamma ^2-1\right)}, \notag \\ 
A_{32}&=\frac{a^2 (M+1) \left(8 (\gamma +1)+(\gamma -1)^3 \gamma  M^4-(\gamma -1)^3 \gamma  M^3-4 \left(2 \gamma ^3-5 \gamma ^2+5 \gamma -2\right) M^2-4 \left(2 \gamma ^2-7 \gamma +5\right) M\right)}{16 \left(\gamma ^2-1\right)}, \notag \\
A_{33}&=-\frac{a \gamma  (M+1) \left((\gamma -1)^2 M^3-(\gamma -1)^2 M^2+(4-8 \gamma ) M-12\right)}{16 (\gamma +1)}. \notag
\end{align}
}

\subsection{AUSM (Liou-Steffen) with linear pressure splitting}
\label{app:ausm-linear}
\[
J^+_{\mathrm{lin},U}=\frac{\partial F^+_{\mathrm{lin}}}{\partial U} = [B_{ij}]_{i,j=1,2,3},
\]
with
\begin{align}
B_{11}&=-\frac{1}{16} a \left(M^2-1\right) \left((\gamma -1) \gamma  M^2+2\right), \notag \\ 
B_{12}&=\frac{1}{8} \left((\gamma -1) \gamma  M^3+\left(-\gamma ^2+\gamma +4\right) M+4\right), \notag \\ 
B_{13}&=-\frac{(\gamma -1) \gamma  (M-1) (M+1)}{8 a}, \notag \\
B_{21}&=-\frac{a^2 M \left(-\gamma ^2 M \left(M^3+M+4\right)+\gamma ^3 M^2 \left(M^2-1\right)+2 \gamma  \left(4 M^2+6 M+1\right)+4\right)}{16 \gamma }, \notag \\ 
B_{22}&=\frac{a \left(-\gamma ^2 M \left(M^3+M+4\right)+\gamma ^3 M^2 \left(M^2-1\right)+2 \gamma  \left(4 M^2+6 M+1\right)+4\right)}{8 \gamma }, \notag \\ 
B_{23}&=-\frac{1}{8} (\gamma -1) \left(\gamma  M^3-(\gamma +2) M-4\right), \notag \\
B_{31}&=-\frac{a^3 (M+1) \left((\gamma -1)^2 \gamma  M^5-(\gamma -1)^2 \gamma  M^4-2 \left(\gamma ^2-6 \gamma +5\right) M^3-6 (\gamma -1)^2 M^2+12 M+4\right)}{32 (\gamma -1)}, \notag \\ 
B_{32}&=\frac{1}{16} a^2 (M+1) \left(\frac{8}{\gamma -1}+(\gamma -1) \gamma  M^4-(\gamma -1) \gamma  M^3-2 (\gamma -4) M^2+(4-6 \gamma ) M\right), \notag \\ 
B_{33}&=\frac{1}{16} a \gamma  \left(-\left((\gamma -1) M^4\right)+(\gamma +1) M^2+8 M+6\right). \notag
\end{align}

\subsection{AUSM (Liou-Steffen) with second-order pressure splitting}
\label{app:ausm-second}
\[
J^+_{\mathrm{2nd},U}=\frac{\partial F^+_{\mathrm{2nd}}}{\partial U} = [C_{ij}]_{i,j=1,2,3},
\]
with
\begin{align}
C_{11}&=-\frac{1}{16} a \left(M^2-1\right) \left((\gamma -1) \gamma  M^2+2\right), \notag \\ 
C_{12}&=\frac{1}{8} \left((\gamma -1) \gamma  M^3+\left(-\gamma ^2+\gamma +4\right) M+4\right), \notag \\ 
C_{13}&=-\frac{(\gamma -1) \gamma  (M-1) (M+1)}{8 a}, \notag \\
C_{21}&=-\frac{a^2 M (M+1) \left(2 (\gamma +3)+(\gamma -1)^2 \gamma  M^3-(\gamma -1)^2 \gamma  M^2-2 \left(2 \gamma ^2-5 \gamma +3\right) M\right)}{16 \gamma }, \notag \\ 
C_{22}&=\frac{a \left(2 (\gamma +3)+(\gamma -1)^2 \gamma  M^4-\left(\gamma ^3+2 \gamma ^2-9 \gamma +6\right) M^2-4 (\gamma -3) \gamma  M\right)}{8 \gamma }, \notag \\ 
C_{23}&=-\frac{1}{8} (\gamma -1) (M+1) \left((\gamma -1) M^2-\gamma  M+M-4\right), \notag \\
C_{31}&=-\frac{a^3 (M+1) \left((\gamma -1)^2 \gamma  M^5-(\gamma -1)^2 \gamma  M^4-2 \left(\gamma ^2-6 \gamma +5\right) M^3-6 (\gamma -1)^2 M^2+12 M+4\right)}{32 (\gamma -1)}, \notag \\ 
C_{32}&=\frac{1}{16} a^2 (M+1) \left(\frac{8}{\gamma -1}+(\gamma -1) \gamma  M^4-(\gamma -1) \gamma  M^3-2 (\gamma -4) M^2+(4-6 \gamma ) M\right), \notag \\ 
C_{33}&=\frac{1}{16} a \gamma  \left(-\left((\gamma -1) M^4\right)+(\gamma +1) M^2+8 M+6\right). \notag
\end{align}

\section{Sturm sequence proof of the non-negativity of the Van Leer discriminant}
\label{app:sturm}

In this appendix we rigorously prove, by means of Sturm's theorem, that the discriminant $\Delta_{\mathrm{vl}}$ of the Van Leer splitting is non-negative for $\gamma\in(1,3)$ and $M\in(-1,1)$. Consequently the quadratic factor $\mu^2-T_{\mathrm{vl}}^{+}\mu+S_{\mathrm{vl}}^{+}$ has two real roots.

From the main text,
\[
\Delta_{\mathrm{vl}}=\frac{a^{2}(M+1)^{2}}{64\gamma^{2}(\gamma+1)^{2}}\,H(\gamma,M),
\]
where $H(\gamma,M)$ is the following polynomial of degree six in $M$ (coefficients depend on $\gamma$):
\[
\begin{aligned}
H(\gamma,M) &= (\gamma-1)^{2}\gamma^{2}M^{6}+(-2\gamma^{4}+4\gamma^{3}-2\gamma^{2})M^{5}
+(-5\gamma^{4}-2\gamma^{3}+19\gamma^{2}-12\gamma)M^{4}\\
&\quad+(20\gamma^{4}-44\gamma^{2}+24\gamma)M^{3}
+(-13\gamma^{4}+26\gamma^{3}+39\gamma^{2}-24\gamma+36)M^{2}\\
&\quad+(-42\gamma^{4}-20\gamma^{3}-50\gamma^{2}-72\gamma-72)M
+(57\gamma^{4}+26\gamma^{3}+53\gamma^{2}+84\gamma+36).
\end{aligned}
\]
Since $a^{2}>0$ and the denominator is positive, $\Delta_{\mathrm{vl}}\ge0$ is equivalent to $H(\gamma,M)\ge0$. One easily checks $H(\gamma,-1)>0$ and $H(\gamma,1)>0$; therefore it suffices to prove that $H$ has no real root in $(-1,1)$. Together with the sign at the endpoints this yields $H>0$ in $(-1,1)$.

\subsection{Construction of the Sturm sequence}
\label{sec:sturm-construction}

Treating $H$ as a polynomial in $M$, we construct its Sturm sequence $\{p_{0},p_{1},\dots,p_{6}\}$ ($p_{0}=H$) by repeated polynomial division (taking the negative remainder). The calculations were performed with the computer algebra system Mathematica. The expressions below are given with all positive denominators omitted, as they do not affect the sign.
{\footnotesize
\begin{align*}
p_{0}(\gamma,M) &= H(\gamma,M),\\[2mm]
p_{1}(\gamma,M) &= 2\Big[-21\gamma^{4}-10\gamma^{3}-25\gamma^{2}-36\gamma
+3(\gamma-1)^{2}\gamma^{2}M^{5}-5(\gamma-1)^{2}\gamma^{2}M^{4} 
-2(\gamma-1)^{2}\gamma(5\gamma+12)M^{3} \\
&\quad+6\gamma(5\gamma^{3}-11\gamma+6)M^{2}
+(-13\gamma^{4}+26\gamma^{3}+39\gamma^{2}-24\gamma+36)M-36\Big],\\[2mm]
p_{2}(\gamma,M) &= \frac{4}{9}\Big[-123\gamma^{4}-56\gamma^{3}-113\gamma^{2}-180\gamma
+(\gamma-1)^{2}\gamma(5\gamma+9)M^{4}+\gamma(-20\gamma^{3}+\gamma^{2}+40\gamma-21)M^{3}\\
&\quad+3(4\gamma^{4}-13\gamma^{3}-14\gamma^{2}+9\gamma-18)M^{2}
+(82\gamma^{4}+31\gamma^{3}+84\gamma^{2}+141\gamma+126)M-72\Big],\\[2mm]
p_{3}(\gamma,M) &= -\frac{18}{(\gamma-1)(5\gamma+9)^{2}}\Big[420\gamma^{7}+544\gamma^{6}+303\gamma^{5}+971\gamma^{4}
549\gamma^{3}-63\gamma^{2}+216\gamma\\
&\quad+(30\gamma^{6}+44\gamma^{5}+45\gamma^{4}+115\gamma^{3}-81\gamma^{2}-207\gamma+54)\gamma M^{3}\\
&\quad+(-100\gamma^{6}+158\gamma^{5}+199\gamma^{4}-911\gamma^{3}+195\gamma^{2}+1089\gamma-54)\gamma M^{2}\\
&\quad-3(50\gamma^{7}+74\gamma^{6}+29\gamma^{5}+81\gamma^{4}+409\gamma^{3}+249\gamma^{2}-72\gamma+108)M
+324\Big],\\[2mm]
p_{4}(\gamma,M) &= -\frac{16\gamma(5\gamma+9)^{2}(\gamma^{2}-2\gamma-3)}{9\bigl(30\gamma^{5}+74\gamma^{4}+119\gamma^{3}+234\gamma^{2}+153\gamma-54\bigr)^{2}}
\times\Big[-687\gamma^{9}+3826\gamma^{8}+4339\gamma^{7}+3364\gamma^{6} \\
&\qquad+13555\gamma^{5}+5250\gamma^{4}+3261\gamma^{3}+5256\gamma^{2}-756\gamma+\Big(233\gamma^{9}-905\gamma^{8}+2315\gamma^{7}+4051\gamma^{6} \\
&\qquad-7051\gamma^{5}+4753\gamma^{4}+6321\gamma^{3}-3735\gamma^{2}+486\gamma-324\Big)M^{2}\\
&\qquad-(42\gamma^{9}+2019\gamma^{8}+3066\gamma^{7}+3681\gamma^{6}+4132\gamma^{5}+11329\gamma^{4}\\
&\qquad+6066\gamma^{3}-801\gamma^{2}+3078\gamma-2916)M-2592\Big],\\[2mm]
p_{5}(\gamma,M) &= -\frac{18\bigl(30\gamma^{5}+74\gamma^{4}+119\gamma^{3}+234\gamma^{2}+153\gamma-54\bigr)^{2}}{(5\gamma+9)^{2}\bigl(233\gamma^{9}-905\gamma^{8}+2315\gamma^{7}+4051\gamma^{6}-7051\gamma^{5}+4753\gamma^{4}+6321\gamma^{3}-3735\gamma^{2}+486\gamma-324\bigr)^{2}}\\
&\quad\times\Big[-8511\gamma^{14}-41054\gamma^{13}+401101\gamma^{12}-730596\gamma^{11}+725918\gamma^{10}+469916\gamma^{9}-978190\gamma^{8}+847552\gamma^{7}
&\qquad+374933\gamma^{6}-747390\gamma^{5}+657\gamma^{4}+242676\gamma^{3}\\
&\qquad+29484\gamma^{2}-73872\gamma+\Bigl(4741\gamma^{14}+7658\gamma^{13}-268187\gamma^{12}+672856\gamma^{11}-777322\gamma^{10}-186452\gamma^{9}+977506\gamma^{8} \\
&\qquad-899592\gamma^{7}-279895\gamma^{6}+562026\gamma^{5}+79497\gamma^{4}-82080\gamma^{3}-150660\gamma^{2}+89424\gamma-11664\Bigr)M+11664\Big], 
\end{align*}
}
% {\tiny
% \begin{align*}
% &p_{6}(\gamma) \\
% &= -\frac{64\gamma^{2}\bigl(1165\gamma^{11}-1263\gamma^{10}+1002\gamma^{9}+44520\gamma^{8}+42294\gamma^{7}-38490\gamma^{6}+34688\gamma^{5}+112596\gamma^{4}+7029\gamma^{3}-28431\gamma^{2}-162\gamma-2916\bigr)^{2}}{\bigl(-4741\gamma^{14}-7658\gamma^{13}+268187\gamma^{12}-672856\gamma^{11}+777322\gamma^{10}+186452\gamma^{9}-977506\gamma^{8}+899592\gamma^{7}+279895\gamma^{6}-562026\gamma^{5}-79497\gamma^{4}+82080\gamma^{3}+150660\gamma^{2}-89424\gamma+11664\bigr)^{2}}\\
% &\quad\times\Big[1191\gamma^{18}-184324\gamma^{17}-407081\gamma^{16}+9695572\gamma^{15}-40340409\gamma^{14}+86095716\gamma^{13}-95230669\gamma^{12}\\
% &\qquad+30188892\gamma^{11}+82420025\gamma^{10}-136322892\gamma^{9}+77078389\gamma^{8}+14495532\gamma^{7}-52620435\gamma^{6}\\
% &\qquad+26160732\gamma^{5}-505359\gamma^{4}+4715172\gamma^{3}-7829460\gamma^{2}+2904336\gamma-314928\Big].
% \end{align*}
% }
\begin{figure}[htbp]
  \begin{center}
      \centerline{\includegraphics[width=1.25\linewidth]{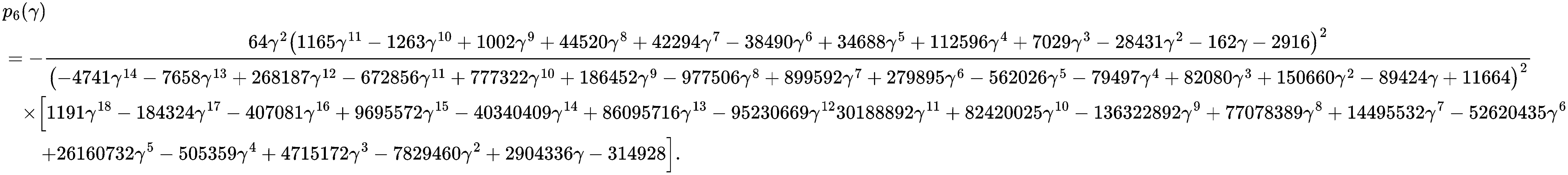}}
  \end{center}
% \caption{\tiny Schematic Diagram of 1D-iALE-CDG Algorithm. }
\label{Fig.VanLeer_Sturm_P5_M_1}
\end{figure}
\hfill \\
Note that $p_{6}$ no longer depends on $M$; it is a function of $\gamma$ alone.

\subsection{Sign of each $p_i$ at the endpoints $M=\pm1$}
\label{sec:sign-endpoints}

For a fixed $\gamma\in(1,3)$ we determine the sign of $p_i(\gamma,1)$ and $p_i(\gamma,-1)$. All denominators appearing in the $p_i$ are positive for $\gamma>1$, so the sign is that of the corresponding numerator.

\subsubsection*{$\bullet~p_0$}
Direct substitution gives
\[
p_{0}(\gamma,1)=16\gamma^{2}(\gamma+1)^{2}>0,\qquad 
p_{0}(\gamma,-1)=64\gamma^{4}+64\gamma^{3}+208\gamma^{2}+96\gamma+144>0.
\]

\subsubsection*{$\bullet~p_1$}
\begin{itemize}
  \item At $M=1$, 
\[
p_{1}(\gamma,1)=-32\gamma\bigl(\gamma^{3}-\gamma^{2}+\gamma+3\bigr).
\]
Set $q(\gamma)=\gamma^{3}-\gamma^{2}+\gamma+3$; then $q'(\gamma)=3\gamma^{2}-2\gamma+1$ has discriminant $-8<0$, hence $q'>0$ and $q$ is strictly increasing. Since $q(1)=4>0$, we have $q(\gamma)>0$ for all $\gamma>1$, and therefore $p_{1}(\gamma,1)<0$.
  
  \item At $M=-1$, 
for $p_{1}(\gamma,-1)=16\bigl(3\gamma^{4}-2\gamma^{3}-22\gamma^{2}+6\gamma-9\bigr)$ denote $r(\gamma)=3\gamma^{4}-2\gamma^{3}-22\gamma^{2}+6\gamma-9$. One finds $r(1)=-24<0$, $r(3)=0$. The derivatives are
$r'(\gamma)=12\gamma^{3}-6\gamma^{2}-44\gamma+6$,
$r''(\gamma)=36\gamma^{2}-12\gamma-44$.
$r''$ has a unique zero in $(1,3)$ with $r''(1)<0$, $r''(3)>0$; consequently $r'$ first decreases then increases. Because $r'(1)=-32<0$ and $r'(3)=144>0$, there is a unique $\gamma^{*}_{1}\in(1,3)$ where $r'=0$, and $r$ decreases on $(1,\gamma^{*}_{1})$ and increases on $(\gamma^{*}_{1},3)$. With $r(1)<0$ and $r(3)=0$ we obtain $r(\gamma)<0$ for all $\gamma\in(1,3)$ (the only zero is at $\gamma=3$). Hence $p_{1}(\gamma,-1)<0$. 
\end{itemize}

\subsubsection*{$\bullet~p_2$}
\[
p_{2}(\gamma,1)=-\frac{16}{9}\gamma\bigl(11\gamma^{3}+16\gamma^{2}+11\gamma+6\bigr)<0,\qquad
p_{2}(\gamma,-1)=-\frac{16}{9}\bigl(42\gamma^{4}+32\gamma^{3}+73\gamma^{2}+66\gamma+63\bigr)<0,
\]
because all polynomials inside the parentheses have positive coefficients.

\subsubsection*{$\bullet~p_3$}
\begin{itemize}
  \item At $M=1$, write
\[
p_{3}(\gamma,1)=-\frac{72\gamma\,f(\gamma)}{(\gamma-1)(5\gamma+9)^{2}},\quad 
f(\gamma)=50\gamma^{6}+131\gamma^{5}+115\gamma^{4}-17\gamma^{3}-141\gamma^{2}+18\gamma+108.
\]
The denominator is positive. We have $f(1)=264>0$ and
\[
f'(\gamma)=300\gamma^{5}+655\gamma^{4}+460\gamma^{3}-51\gamma^{2}-282\gamma+18.
\]
For $\gamma\ge1$, using $\gamma^{2}\le\gamma^{3}$ and $\gamma\le\gamma^{3}$,
\[
-51\gamma^{2}-282\gamma\ge-51\gamma^{3}-282\gamma^{3}=-333\gamma^{3},
\]
so $f'(\gamma)\ge300\gamma^{5}+655\gamma^{4}+460\gamma^{3}-333\gamma^{3}+18>0$. Thus $f$ is strictly increasing and $f>0$; consequently $p_{3}(\gamma,1)<0$.

\item At $M=-1$, similarly,
\[
p_{3}(\gamma,-1)=-\frac{72\,g(\gamma)}{(\gamma-1)(5\gamma+9)^{2}},\quad 
g(\gamma)=110\gamma^{7}+220\gamma^{6}+136\gamma^{5}+47\gamma^{4}+513\gamma^{3}+495\gamma^{2}-27\gamma+162.
\]
$g(1)=1656>0$ and
\[
g'(\gamma)=770\gamma^{6}+1320\gamma^{5}+680\gamma^{4}+188\gamma^{3}+1539\gamma^{2}+990\gamma-27.
\]
For $\gamma\ge1$ each positive term is at least its value at $\gamma=1$, therefore
\[
g'(\gamma)\ge770+1320+680+188+1539+990-27=5460>0.
\]
Hence $g$ is strictly increasing, $g>0$, and $p_{3}(\gamma,-1)<0$.
\end{itemize}

\subsubsection*{$\bullet~p_4$}
\begin{itemize}
  \item At $M=1$, 
for $p_{4}(\gamma,1)$ the factor in front of $A(\gamma)$ is positive except for $(\gamma-3)<0$, and
\[
A(\gamma)=248\gamma^{7}-699\gamma^{6}-1095\gamma^{5}-772\gamma^{4}-414\gamma^{3}+1077\gamma^{2}-2835\gamma+1674.
\]
From its derivatives one obtains the following sign pattern (detailed values are collected in Table~\ref{tab:A-derivatives}):
For \(\gamma\ge 1\), \(A^{(6)}(\gamma)=1736\gamma-699>0\) on \([1,3]\). Thus \(A^{(5)}\) is strictly increasing on this interval. 
Note 
\(
A^{(5)}(1)<0,~
A^{(5)}(3)>0,
\) 
by the Intermediate Value Theorem, there exists a unique \(\xi_{5}\in(1,3)\) such that \(A^{(5)}(\xi_{5})=0\); moreover, \(A^{(5)}(\gamma)<0\) for \(\gamma\in[1,\xi_{5})\) and \(A^{(5)}(\gamma)>0\) for \(\gamma\in(\xi_{5},3]\).
Consequently, \(A^{(4)}\) decreases on \([1,\xi_{5}]\) and increases on \([\xi_{5},3]\). Its endpoint values are 
\(
A^{(4)}(1)<0,~
A^{(4)}(3)>0. 
\) 
Hence there exists a unique \(\xi_{4}\in(\xi_{5},3)\) with \(A^{(4)}(\xi_{4})=0\); \(A^{(4)}<0\) on \([1,\xi_{4})\) and \(>0\) on \((\xi_{4},3]\).
Next, \(A'''\) decreases on \([1,\xi_{4}]\) and increases on \([\xi_{4},3]\). We have 
\(
A'''(1)=<0,~
A'''(3)>0, 
\) 
so there exists a unique \(\xi_{3}\in(\xi_{4},3)\) with \(A'''(\xi_{3})=0\); \(A'''<0\) on \([1,\xi_{3})\) and \(>0\) on \((\xi_{3},3]\). 
Then \(A''\) decreases on \([1,\xi_{3}]\) and increases on \([\xi_{3},3]\). Its endpoint values are 
\(
A''(1)=<0,~
A''(3)=>0,
\) 
hence there exists a unique \(\xi_{2}\in(\xi_{3},3)\) with \(A''(\xi_{2})=0\); \(A''<0\) on \([1,\xi_{2})\) and \(>0\) on \((\xi_{2},3]\). 
Finally, \(A'\) decreases on \([1,\xi_{2}]\) and increases on \([\xi_{2},3]\). Its values at the endpoints are 
\(
A'(1)<0,~ A'(3)<0. 
\) 
Hence, \(A'(\gamma)<0\) for every \(\gamma\in[1,3]\). 
Therefore \(A\) is strictly decreasing on \([1,3]\). With \(A(1)<0\), we conclude \(A(\gamma)<0\) for all \(\gamma\in[1,3]\), 
implies \(p_4(\gamma,1)>0\) for every \(\gamma\in(1,3)\). 
\begin{table}[h!]
\centering
\caption{Values of \(A(\gamma)\) and its derivatives at the endpoints}
\label{tab:A-derivatives}
\begin{tabular}{r|ll}
\hline
 & \(\gamma=1\) & \(\gamma=3\) \\ \hline
\(A^{(6)}(\gamma)\) & \(746\,640\) & \(3\,246\,480\) \\
\(A^{(5)}(\gamma)\) & \(-9\,720\) & \(3\,983\,400\) \\
\(A^{(4)}(\gamma)\) & \(-193\,248\) & \(2\,947\,152\) \\
\(A'''(\gamma)\) & \(-118\,512\) & \(1\,304\,352\) \\
\(A''(\gamma)\) & \(-42\,048\) & \(152\,544\) \\
\(A'(\gamma)\) & \(-12\,944\) & \(-288\,000\) \\
\(A(\gamma)\) & \(-2\,816\) & \(-304\,128\) \\ \hline
\end{tabular}
\end{table}

\item At $M=-1$, 
for $p_{4}(\gamma,-1)$ the factor in front of $B(\gamma)$ is $\gamma^{2}-2\gamma-3<0$, and
\[
B(\gamma)=103\gamma^{9}-1235\gamma^{8}-2430\gamma^{7}-2774\gamma^{6}-2659\gamma^{5}-5333\gamma^{4}-3912\gamma^{3}-180\gamma^{2}-702\gamma+1458.
\]
The endpoint values of the derivatives of $B$ are listed in Table~\ref{tab:B-derivatives}. Analysing from the highest derivative downwards (similar to the treatment of $A$) shows that $B'<0$ on $[1,3]$ and $B(1)=-17664<0$; therefore $B(\gamma)<0$. The product of the two negatives gives $p_{4}(\gamma,-1)>0$.

\begin{table}[h!]
\centering
\caption{Values of the derivatives of $B(\gamma)$ at the endpoints}
\label{tab:B-derivatives}
\begin{tabular}{r|ll}
\hline
 & $\gamma=1$ & $\gamma=3$ \\ \hline
$B^{(8)}(\gamma)$ & $-12\,418\,560$ & $62\,334\,720$ \\
$B^{(7)}(\gamma)$ & $-43\,354\,080$ & $6\,562\,080$ \\
$B^{(6)}(\gamma)$ & $-32\,912\,640$ & $-94\,622\,400$ \\
$B^{(5)}(\gamma)$ & $-15\,181\,800$ & $-159\,355\,560$ \\
$B^{(4)}(\gamma)$ & $-5\,250\,240$ & $-157\,556\,496$ \\
$B^{\prime\prime\prime}(\gamma)$ & $-1\,517\,232$ & $-115\,156\,800$ \\
$B^{\prime\prime}(\gamma)$ & $-388\,032$ & $-67\,822\,848$ \\
$B^{\prime}(\gamma)$ & $-90\,032$ & $-33\,730\,560$ \\ 
$B(\gamma)$       & $-17664$    & $-14598144$ \\ \hline
\end{tabular}
\end{table}
\end{itemize}

\subsubsection*{$\bullet~p_5$}
\begin{itemize}
  \item At $M=-1$, for $p_{5}(\gamma,-1)$ the numerator is the polynomial
\begin{align*}
C(\gamma) =& 3313\gamma^{14}+12178\gamma^{13}-167322\gamma^{12}+350863\gamma^{11}-375810\gamma^{10}-164092\gamma^{9}+488924\gamma^{8}-436786\gamma^{7} \\
& -163707\gamma^{6}+327354\gamma^{5}+19710\gamma^{4}-81189\gamma^{3}-45036\gamma^{2}+40824\gamma-5832.
\end{align*}
Table~\ref{tab:C-derivatives} gives the endpoint values of its derivatives. Starting from $C^{(13)}$ (which is linear with $C^{(13)}(1)>0$, $C^{(13)}(3)>0$) and proceeding downwards, one finds that $C'<0$ on $[1,3]$ and $C(1)=-196608<0$; hence $C(\gamma)<0$ for all $\gamma\in(1,3)$. All other factors in $p_{5}(\gamma,-1)$ are positive, so $p_{5}(\gamma,-1)<0,~\forall \gamma\in(1,3)$. 
\begin{table}[h!]
\centering
\caption{Values of the derivatives of $C(\gamma)$ at the endpoints}
\label{tab:C-derivatives}
\begin{tabular}{r|ll}
\hline
 & $\gamma=1$ & $\gamma=3$ \\ \hline
$C^{(13)}(\gamma)$ & $364654338048000$ & $942297695539200$ \\
$C^{(12)}(\gamma)$ & $140095992960000$ & $1447048026547200$ \\
$C^{(11)}(\gamma)$ & $19911098592000$ & $1414507332268800$ \\
$C^{(10)}(\gamma)$ & $-2759150822400$ & $996008602176000$ \\
$C^{(9)}(\gamma)$  & $-2211997092480$ & $539008562090880$ \\
$C^{(8)}(\gamma)$  & $-693879943680$ & $232223479246080$ \\
$C^{(7)}(\gamma)$  & $-161262239040$ & $80729985792960$ \\
$C^{(6)}(\gamma)$  & $-31604832000$ & $22326801897600$ \\
$C^{(5)}(\gamma)$  & $-5513036160$ & $4523784364800$ \\
$C^{(4)}(\gamma)$  & $-875367936$ & $403344451584$ \\
$C^{\prime\prime\prime}(\gamma)$  & $-126127488$ & $-172475993088$ \\
$C^{\prime\prime}(\gamma)$  & $-16296448$ & $-120012005376$ \\
$C^{\prime}(\gamma)$  & $-1884160$ & $-47146696704$ \\ 
$C(\gamma)$  & $-196608$  & $-14714929152$ \\ \hline
\end{tabular}
\end{table}

\item At $M=1$, for $p_{5}(\gamma,1)$ the numerator is
\begin{align*}
D(\gamma)=& 1885\gamma^{13}+16698\gamma^{12}-66457\gamma^{11}+28870\gamma^{10}+25702\gamma^{9}-141732\gamma^{8}+342\gamma^{7}+26020\gamma^{6}-47519\gamma^{5} \\
& +92682\gamma^{4}-40077\gamma^{3}-80298\gamma^{2}+60588\gamma-7776.
\end{align*}
Its derivative data are collected in Table~\ref{tab:D-derivatives}. Analysing from the highest derivative shows that $D'$ changes sign (only once) from negative to positive, so $D$ first decreases then increases. With $D(1)=-131072<0$ and $D(3)=1401421824>0$, there exists a unique $\gamma^{*}_{5}\in(1,3)$ such that $D(\gamma^{*}_{5})=0$. Consequently
\[
p_{5}(\gamma,1)<0\ \text{for}\ \gamma\in(1,\gamma^{*}_{5}),\qquad 
p_{5}(\gamma,1)>0\ \text{for}\ \gamma\in(\gamma^{*}_{5},3).
\]
The graph of $D(\gamma)$ is shown in Figure~\ref{Fig.VanLeer_Sturm_P5_M_1_D}. 
\begin{table}[h!]
\centering
\caption{Values of the derivatives of $D(\gamma)$ at the endpoints}
\label{tab:D-derivatives}
\begin{tabular}{r|ll}
\hline
 & $\gamma=1$ & $\gamma=3$ \\ \hline
$D^{(13)}(\gamma)$ & $11737934208000$ & $11737934208000$ \\
$D^{(12)}(\gamma)$ & $19736302924800$ & $43212171340800$ \\
$D^{(11)}(\gamma)$ & $11214585043200$ & $74163059308800$ \\
$D^{(10)}(\gamma)$ & $3407519404800$ & $80959874284800$ \\
$D^{(9)}(\gamma)$  & $609856853760$ & $63994425788160$ \\
$D^{(8)}(\gamma)$  & $44950187520$ & $39320133834240$ \\
$D^{(7)}(\gamma)$  & $-11164487040$ & $19624584873600$ \\
$D^{(6)}(\gamma)$  & $-5585713920$ & $8189808076800$ \\
$D^{(5)}(\gamma)$  & $-1483180800$ & $2912811087360$ \\
$D^{(4)}(\gamma)$  & $-304326144$ & $892843978752$ \\
$D^{\prime\prime\prime}(\gamma)$  & $-52071168$ & $236581337088$ \\
$D^{\prime\prime}(\gamma)$  & $-7744512$ & $53714534400$ \\
$D^{\prime}(\gamma)$  & $-1048576$ & $10101915648$ \\ 
$D(\gamma)$  & $-131072$  & $1401421824$ \\ \hline
\end{tabular}
\end{table}
\begin{figure}[htbp]
  \begin{center}
      \centerline{\includegraphics[width=0.65\linewidth]{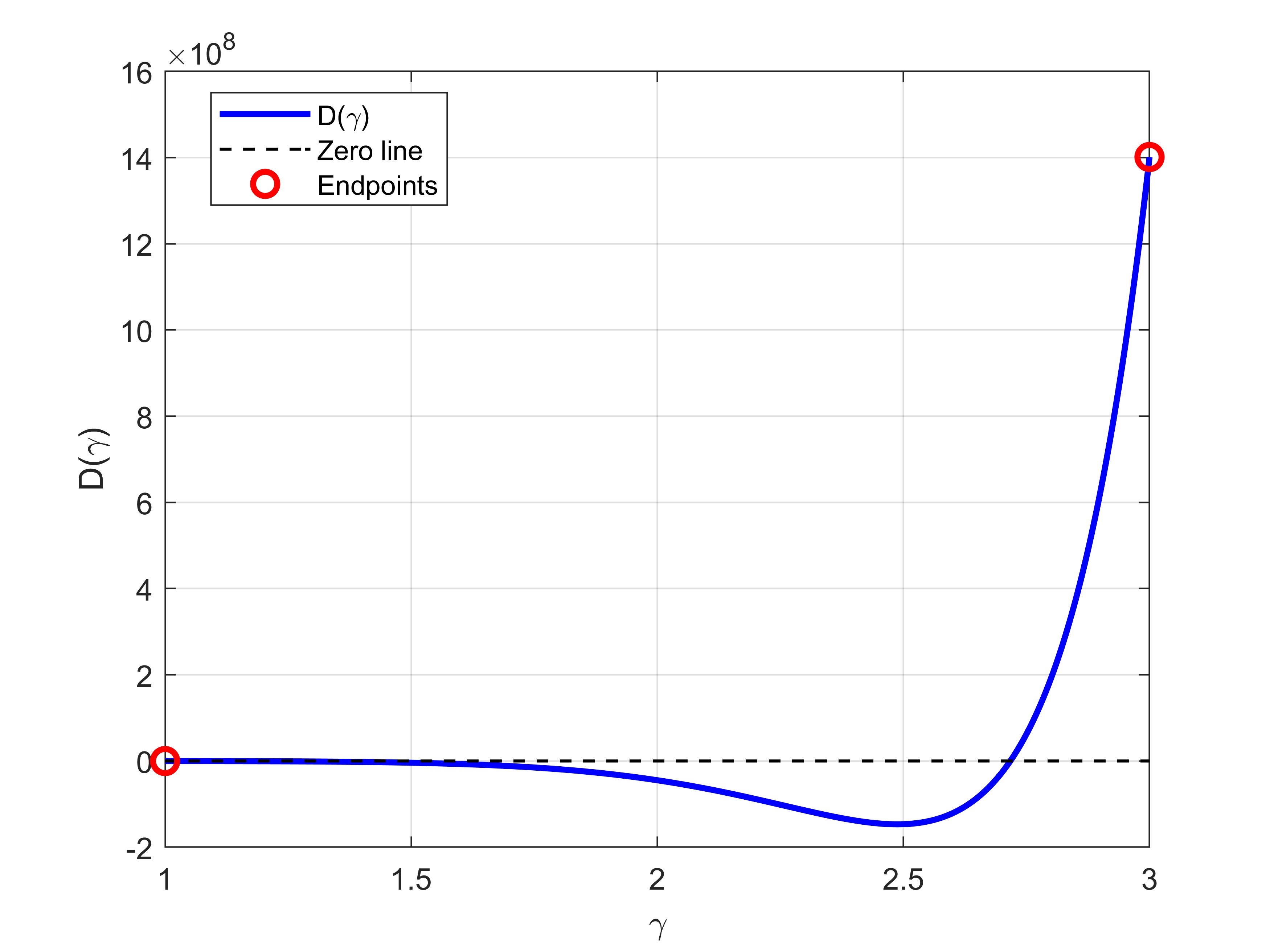}}
  \end{center}
\caption{\tiny Polynomial $D(\gamma)$ on $[1,3]$. }
\label{Fig.VanLeer_Sturm_P5_M_1_D}
\end{figure}
\end{itemize}

\subsubsection*{$\bullet~p_6$}
$p_6(\gamma)$ can be written as
\[
p_{6}(\gamma)=-\frac{64\gamma^{2}\,P(\gamma)^{2}}{R(\gamma)^{2}}\,Q(\gamma),
\]
where $P(\gamma)$ and $R(\gamma)$ are the polynomials appearing in the numerator and denominator of the first factor; they are clearly positive for $\gamma>1$. The sign of $p_{6}$ is therefore opposite to that of
\begin{align*}
Q(\gamma)=&1191\gamma^{18}-184324\gamma^{17}-407081\gamma^{16}+9695572\gamma^{15}-40340409\gamma^{14}+86095716\gamma^{13}-95230669\gamma^{12} \\
&+30188892\gamma^{11}+82420025\gamma^{10}-136322892\gamma^{9}+77078389\gamma^{8}+14495532\gamma^{7}-52620435\gamma^{6} \\
&+26160732\gamma^{5}-505359\gamma^{4}+4715172\gamma^{3}-7829460\gamma^{2}+2904336\gamma-314928.
\end{align*}
Table~\ref{tab:Q-derivatives} displays the endpoint values of the derivatives of $Q$. Starting from the constant $Q^{(18)}>0$, one finds that $Q^{\prime}$ is first positive then negative; hence $Q$ increases up to a unique maximum and then decreases. Because $Q(1)=Q(3)=0$, we obtain $Q(\gamma)>0$ for all $\gamma\in(1,3)$. Consequently $p_{6}(\gamma)<0$ on $(1,3)$ and $p_{6}(1)=p_{6}(3)=0$.

\begin{table}[h!]
\centering
\caption{Values of the derivatives of $Q(\gamma)$ at the endpoints}
\label{tab:Q-derivatives}
\begin{tabular}{r|ll}
\hline
 & $\gamma=1$ & $\gamma=3$ \\ \hline
% $Q^{(18)(\gamma)}$ & $7625227083522048000$ & $7625227083522048000$ \\
$Q^{(17)}(\gamma)$ & $-57936502412845056000$ & $-42686048245800960000$ \\
$Q^{(16)}(\gamma)$ & $-70266386185003008000$ & $-170888936843649024000$ \\
$Q^{(15)}(\gamma)$ & $-27348612810494976000$ & $-273587420561495040000$ \\
$Q^{(14)}(\gamma)$ & $-5706029171543500800$ & $-273101212323984844800$ \\
$Q^{(13)}(\gamma)$ & $-729102823242393600$ & $-197117842753614182400$ \\
$Q^{(12)}(\gamma)$ & $-45434704276684800$ & $-110996709244956057600$ \\
$Q^{(11)}(\gamma)$ & $5269511290982400$ & $-51078210726870835200$ \\
$Q^{(10)}(\gamma)$ & $2522668255027200$ & $-19815817769091993600$ \\
$Q^{(9)}(\gamma)$  & $595405723852800$ & $-6624699226522030080$ \\
$Q^{(8)}(\gamma)$  & $112374681108480$ & $-1938841892060774400$ \\
$Q^{(7)}(\gamma)$  & $18308704296960$ & $-502270624036823040$ \\
$Q^{(6)}(\gamma)$  & $2608995594240$ & $-115962154917396480$ \\
$Q^{(5)}(\gamma)$  & $323993088000$ & $-23911864155832320$ \\
$Q^{(4)}(\gamma)$  & $34432942080$ & $-4384097958887424$ \\
$Q^{\prime\prime\prime}(\gamma)$  & $3023659008$ & $-702783487475712$ \\
$Q^{\prime\prime}(\gamma)$  & $206831616$ & $-93943165353984$ \\
$Q^{\prime}(\gamma)$  & $9437184$ & $-8879408676864$ \\ 
$Q(\gamma)$  & $0$ & $0$ \\ \hline
\end{tabular}
\end{table}

\subsection{Sign changes and application of Sturm's theorem}
\label{sec:sturm-application}
According to Sturm's theorem, the number of distinct real roots of $H(\gamma,M)$ in $(-1,1)$ equals the difference of the numbers of sign changes of the sequence $\{p_{0},p_{1},\dots,p_{6}\}$ at $M=-1$ and $M=1$.
\par
For $\gamma\in(1,\gamma^{*}_{5})$ the signs at the endpoints are
\[
\begin{array}{c|c|c}
 & M=-1 & M=1 \\ \hline
p_{0} & + & + \\
p_{1} & - & - \\
p_{2} & - & - \\
p_{3} & - & - \\
p_{4} & + & + \\
p_{5} & - & - \\
p_{6} & - & - \\
\end{array}
\]
% (where a zero value is treated by its right-hand limit).  
\begin{itemize}
  \item At $M=-1$ the sign pattern $+,-,-,-,+,-,-$ contains three sign changes ($+\to-$, $-\to+$, $+\to-$). 
  \item At $M=1$ the pattern is the same, hence also three changes.
\end{itemize} 
\par 
For $\gamma\in(\gamma^{*}_{5},3)$ the sign of $p_{5}(\gamma,1)$ becomes $+$, while all other signs remain unchanged:
\[
\begin{array}{c|c|c}
 & M=-1 & M=1 \\ \hline
p_{0} & + & + \\
p_{1} & - & - \\
p_{2} & - & - \\
p_{3} & - & - \\
p_{4} & + & + \\
p_{5} & - & + \\
p_{6} & - & - \\
\end{array}
\]
\begin{itemize}
  \item At $M=-1$ the pattern is unchanged, still three changes.  
  \item At $M=1$ the pattern is $+,-,-,-,+,+,-$, which also exhibits three sign changes ($+\to-$, $-\to+$, $+\to-$).
\end{itemize} 
\par
Thus for every $\gamma\in(1,3)$ we have $V(-1)=V(1)=3$, and Sturm's theorem guarantees that $H(\gamma,M)$ has \textbf{no} root in $(-1,1)$.

\subsection{Conclusion}
\label{sec:conclusion}
Because $H(\gamma,-1)>0$, $H(\gamma,1)>0$ and there is no root inside $(-1,1)$, we obtain $H(\gamma,M)>0$ for all $M\in(-1,1)$ and all $\gamma \in (1,3)$. Hence $\Delta_{\mathrm{vl}}>0$ (except on the boundary $M=-1$), and the two non-zero eigenvalues of the Van Leer splitting are real.  
This completes the proof of Lemma~\ref{lemma-vl-discriminant-real}.  

\end{document}